\newcommand {\bea}{\begin{eqnarray}}
\newcommand {\ea}{\end{eqnarray}}
\newtheorem{proposition}{Proposition}[section]
\newtheorem{theorem}{Theorem}[section]
\newtheorem{Assumption}{Assumption}[section]
\newtheorem{lemma}{Lemma}[section]
\newenvironment{proof}[1][Proof]{\textbf{#1.} }{\hspace{\stretch{1}}\rule{0.5em}{0.5em}}
\newcommand{\thmref}[1]{{Theorem~\ref{#1}}}
\newcommand{\lemref}[1]{{Lemma~\ref{#1}}}
\newcommand{\assref}[1]{{Assumption~\ref{#1}}}
\newcommand{\propref}[1]{{Proposition~\ref{#1}}}
\newcommand{\rmref}[1]{{Remark~\ref{#1}}}
\begin{document}
\begin{frontmatter}
\title{Strong convergence  of the backward Euler approximation for the finite element discretization of semilinear parabolic SPDEs with non-global Lipschitz drift driven by additive noise}

\author[jdm]{Jean Daniel Mukam}
\ead{jean.d.mukam@aims-senegal.org}
\address[jdm]{Fakult\"{a}t f\"{u}r Mathematik, Technische Universit\"{a}t Chemnitz, 09126 Chemnitz, Germany.}

\author[at,atb,atc]{Antoine Tambue}
\cortext[cor1]{Corresponding author}
\ead{antonio@aims.ac.za}
\address[at]{Department of Computing Mathematics and Physics,  Western Norway University of Applied Sciences, Inndalsveien 28, 5063 Bergen.}
\address[atb]{Center for Research in Computational and Applied Mechanics (CERECAM), and Department of Mathematics and Applied Mathematics, University of Cape Town, 7701 Rondebosch, South Africa.}
\address[atc]{The African Institute for Mathematical Sciences(AIMS) of South Africa,
6-8 Melrose Road, Muizenberg 7945, South Africa.}


\begin{abstract}
This paper deals with the backward Euler method applied to  semilinear parabolic stochastic partial differential equations (SPDEs) driven by additive noise. 
The SPDE is discretized in space by the finite element method and in time by the backward Euler. 
 We consider a larger  class  of nonlinear drift functions, which are of  Nemytskii type and polynomial  of any odd degree with negative leading term,  instead of only dealing  with the special case of stochastic Allen-Chan equation as in the up to date literature.
Moreover our linear operator is of second order and  not necessarily self-adjoint, therefore makes   estimates more challenging than in the case of self-adjoint operator. We prove the strong convergence of our fully discrete schemes toward the mild solution and  results indicate how the convergence rates depend on the regularities of the initial data and the noise. In particular, for trace class noise, we achieve convergence order $\mathcal{O} (h^{2}+\Delta t^{1-\epsilon})$, where $\epsilon>0$ is positive number, small enough.  We also provide numerical experiments to illustrate our theoretical results.
\end{abstract}

\begin{keyword}
Stochastic parabolic partial differential equations \sep  Backward Euler method  \sep Finite element method \sep Errors estimate \sep One-sided Lipschitz \& Polynomial growth conditions. 

\end{keyword}
\end{frontmatter}
\section{Introduction}
\label{intro}
We consider numerical approximations of SPDE defined in $\Lambda\subset \mathbb{R}^d$, $d=1,2, 3$,  with initial value and  boundary conditions of the following type
\begin{eqnarray}
\label{model}
dX(t)+AX(t)dt=F(X(t))dt+dW(t), \quad X(0)=X_0, \quad t\in(0,T]
\end{eqnarray}
 on the Hilbert space $L^2(\Lambda)$, where    $T>0$ is the final time,   
$A$ is an unbounded  linear operator, not necessarily self-adjoint.
The noise  $W(t)=W(x,t)$ is a $Q-$Wiener process defined in a filtered probability space $(\Omega,\mathcal{F}, \mathbb{P}, \{\mathcal{F}_t\}_{t\geq 0})$ and  $Q : H\longrightarrow H$ is a positive linear self-adjoint operator.
The filtration is assumed to fulfill the usual conditions (see e.g., \cite[Definition 2.1.11]{Prevot}). Note that the noise can be represented as follows, see e.g., \cite{Prato,Prevot}
\begin{eqnarray}
\label{noise}
W(x,t)=\sum_{i=0}^{\infty}\sqrt{q_i}e_i(x)\beta_i(t), \quad t\in[0,T],\quad x\in H,
\end{eqnarray} 
where $q_i$, $e_i$, $i\in\mathbb{N}^d$ are respectively the eigenvalues and the eigenfunctions of the covariance operator $Q$,
and $\beta_i$ are independent and identically distributed standard Brownian motions.
 Precise assumptions on $F$, $B$,  $X_0$ and $A$  will be given in the next section to ensure the existence of the unique mild solution $X$ of \eqref{model}, 
 which has the following representation, see e.g.,  \cite{Prato,Prevot}
\begin{eqnarray}
\label{mild1}
X(t)=S(t)X_0+\int_0^tS(t-s)F(X(s))ds+\int_0^tS(t-s)dW(s), \quad t\in[0,T].
\end{eqnarray}
Equations of type \eqref{model} are used to model different real world phenomena in different fields such as biology, chemistry, physics, see e.g., \cite{Shardlow,SebaGatam,ATthesis}. In many cases, 
explicit solutions of SPDEs are unknown, therefore numerical methods are  the only good tools to provide realistic approximations. Numerical approximation of SPDE  \eqref{model}  
is therefore an active research area and have attracted a lot of attentions since two decades, see e.g., \cite{Antonio1,Xiaojie1,Xiaojie2,Raphael,Jentzen1,Jentzen2,Yan1}  and references therein.
The convergence analysis of many numerical schemes in the literature are based on the global Lipschitz condition of the drift function $F$. However,  for many physical problems, the nonlinear function $F$ does not satisfy the global Lipschitz condition. For instance for the stochastic Allen-Cahn equation, the nonlinear drift function is of type $F(u)=u-u^3$, which is not globally Lipschitz, see e.g., \cite{Brehier1,Kovac1}. It is well known that  the standard Euler-Maruyama method for stochastic differential equations (SDEs) with non-global Lipschitz drift diverges \cite{Hutzenthaler1}.  For SDEs with non-global Lipschitz condition, implicit schemes were proved to  converge strongly to the exact solution \cite{Xuerong}. Recently,  explicit taming methods were proved to be efficient for such SDEs, see e.g., \cite{Hutzenthaler2,AntjMaster}. This taming strategy is being extended to the case of SPDEs with non-global Lipschitz drift function, see e.g., \cite{Becker1,Xiaojie4}. In fact, numerical approximations of SPDEs with non-global Lipschitz drift is  currently a hot topic in numerical analysis. Campbell and Lord \cite{Lord1} proposed an adaptive time-stepping scheme for such problems.  Br\'{e}hier et al. \cite{Brehier1};   Br\'{e}hier and Gouden\`{e}ge \cite{Brehier2} analyze the converge of some explicit splitting approximations for  stochastic Allen-Chan equation. All the above mentioned schemes are explicit and are of Euler-type. Since backward Euler method is more stable than the explicit Euler schemes,
   they have been considered in the literature.
    Kov\'{a}cs et al. \cite{Fredrik2,Fredrik1} analyze the backward Euler and a fully implicit split-step schemes for stochastic Allen-Cahn equation. However, to the best of our knowledge in the above mentioned works,  the linear operator is assumed to be the Laplace operator, which is self-adjoint and therefore makes the  estimations more easier than they would be in more general non-self adjoint case.
    
      In this paper, we consider  the full discretization of a general semilinear second order SPDE  where  the linear operator  is non-necessarily self-adjoint and therefore makes estimates more challenging. 
       We discretise in space  the SPDE  by the finite element method and in time by the backward Euler  method. Note that in many of the above cited references, the convergence results are done only for the particular case of stochastic Allen-Cahn equation. In fact, one key argument in such situation is the Sobolev embedding $H^1_0(\Lambda)\subset L^6(\Lambda)$, which is due to the fact the drift function is polynomial of degree $3$ and the spatial dimension is $d\leq 3$,  see e.g. \cite{Fredrik1,Fredrik2}. 
       For  more general cases, the strong convergence rates of numerical schemes are far from being understood and it was still an open problem to derive general
strong convergence rates of numerical schemes for SPDEs with non-global Lipschitz coefficients.
 Recently Campbell and Lord \cite{Lord1} proposed  an adaptive time-stepping method  to solve such problems. However, the assumptions  on the nonlinear drift function made in \cite{Lord1}  do  not include the case of stochastic Allen-Cahn equation investigated in \cite{Fredrik1,Fredrik2,Brehier2,Brehier1},  therefore  these assumptions are still restrictive. In fact, as the solution process is sough in the Hilbert $L^2(\Lambda)$,  the function $F(u)=u-u^3$, $u\in L^2(\Lambda)$ does not satisfy the polynomial growth condition stated in \cite{Lord1} (see e.g. \cite[Assumption 2.4]{Lord1}). 
 In this work, we derive  general strong convergence  rates for SPDEs with nonlinear drift of Nemytskii type, which include the case of stochastic Allen-Cahn equation.
  More, precisely, we consider $F(u)(x)=\varphi(u(x))$, $u\in L^2(\Lambda)$ and $x\in\Lambda$, where $\varphi :\mathbb{R}\longrightarrow\mathbb{R}$ is polynomial of any odd order with negative leading coefficient. Hence our setting is more general than the ones in the current scientific literature  and generalize  the case of stochastic  Allen-Chan equation. 
   In almost all the above mentioned references, the optimal convergence order is $\frac{1}{2}$. For instance the optimal convergence order in \cite{Brehier2,Brehier1} is $\frac{1}{2}$,  providing that $\Vert A^{\frac{1}{2}}Q^{\frac{1}{2}}\Vert_{\mathcal{L}_2(H)}< \infty$ (which corresponds to \assref{assumption4} with $\beta=2$). In this paper, we prove that our numerical schemes achieve rather higher convergence orders of the form $\mathcal{O}\left(h^{\beta}+\Delta t^{\frac{\beta}{2}-\epsilon}\right)$, where $\epsilon>0$ is an arbitrarily number small enough. So for $\beta=2$, we achieve an optimal convergence  rate  $1-\epsilon$ in time.

The rest of this paper is organized as follows. Section \ref{wellposed} deals with the well-posedness problem,  the numerical scheme together with the main result. 
In Section \ref{Convergence}, we provide the proof of the main result. We end the paper in Section \ref{numerik} with a numerical experiment illustrating our theoretical result.

 \section{Mathematical setting and numerical method}
 \label{wellposed}
\subsection{Main assumptions and well-defined problem}
\label{notation}
Let us define functional spaces, norms and notations that  will be used in the rest of the paper. Let $\mathcal{C}:=\mathcal{C}(\overline{\Lambda}, \mathbb{R})$ be the set of continuous functions equipped with the norm $\Vert u\Vert_{\mathcal{C}}=\sup\limits_{x\in\overline{\Lambda}}\vert u(x)\vert$, $u\in\mathcal{C}$ Let  $(H,\langle.,.\rangle,\Vert .\Vert)$ be a separable Hilbert space and $(U, \Vert .\Vert_U)$ a Banach space. We denote by $L^p(\Omega, U)$, $p\geq 2$ the Banach space of all equivalence classes of $p$-integrable $U$-valued random variables. The norms in the Sobolev spaces $H^m(\Lambda),\, m \geqslant 0$ will be denoted by
$\Vert. \Vert_m$. By $L(U,H)$, we denote
 the space of bounded linear mappings from $U$ to $H$ endowed with the usual  operator norm $\Vert .\Vert_{L(U,H)}$, by  $\mathcal{L}_2(U,H):=HS(U,H)$ we denote
  the space of Hilbert-Schmidt operators from $U$ to $H$. 
  We equip $\mathcal{L}_2(U,H)$ with the norm
 \begin{eqnarray}
 \label{def1}
 \Vert l\Vert^2_{\mathcal{L}_2(U,H)} :=\sum_{i=1}^{\infty}\Vert l\psi_i\Vert^2,\quad l\in \mathcal{L}_2(U,H),
 \end{eqnarray}
 where $(\psi_i)_{i=1}^{\infty}$ is an orthonormal basis of $U$. Note that \eqref{def1} is independent of the orthonormal basis of $U$.
  For simplicity, we use the notations $L(U,U)=:L(U)$,  $\mathcal{L}_2(U,U)=:\mathcal{L}_2(U)$ and in the case $U=Q^{\frac{1}{2}}(H)$ we use the notation $L^0_2:=\mathcal{L}_2\left(Q^{\frac{1}{2}}(H), H\right)$.
  It is well known that for all $l\in L(U,H)$ and $l_1\in\mathcal{L}_2(U)$, it holds that $ll_1\in\mathcal{L}_2(U,H)$ and 
  \begin{eqnarray}
  \label{trace1}
  \Vert ll_1\Vert_{\mathcal{L}_2(U,H)}\leq \Vert l\Vert_{L(U,H)}\Vert l_1\Vert_{\mathcal{L}_2(U)}.
  \end{eqnarray}
Let $p\in[2,\infty)$ and $t\in[0, T]$.
 For any $L^0_2$-valued predictable process $\phi(s)$, $0\leq\tau_1\leq s\leq \tau_2\leq T$, the following estimate called Burkh\"{o}lder-Davis-Gundy inequality holds
\begin{eqnarray}
\label{Davis1}
\left\Vert\int_{\tau_1}^{\tau_2}\phi(s)dW(s)\right\Vert_{L^p(\Omega, H)}&\leq& C(p)\left(\int_{\tau_1}^{\tau_2}\Vert \phi(s)\Vert^2_{L^p\left(\Omega,L^0_2\right)}ds\right)^{1/2}\nonumber\\
&=& C(p)\left(\int_{\tau_1}^{\tau_2}\Vert \phi(s)Q^{\frac{1}{2}}\Vert^2_{L^p\left(\Omega,\mathcal{L}_2\right)}ds\right)^{1/2},
\end{eqnarray}
see e.g., \cite[Proposition 2.6]{Raphael1} or \cite[Theorem 4.36]{Prato}.
 
 Throughout this paper, we assume that $\Lambda$ is bounded and has smooth boundary or is a convex polygon of $\mathbb{R}^d$, $d=1,2,3$. From now we consider  the linear operator $A$ to be given by
\begin{eqnarray}
\label{operator}
Au=-\sum_{i,j=1}^{d}\dfrac{\partial}{\partial x_i}\left(D_{ij}(x)\dfrac{\partial u}{\partial x_j}\right)+\sum_{i=1}^dq_i(x)\dfrac{\partial u}{\partial x_i},\quad
\mathbf{D}=\left(D_{i,j} \right)_{1\leq i,j \leq d},\,\,\,\,\,\,\, \mathbf{q}=\left( q_i \right)_{1 \leq i \leq d},
\end{eqnarray}
where $D_{ij}\in L^{\infty}(\Lambda)$, $q_i\in L^{\infty}(\Lambda)$. We assume that there exists a  constant $c_1>0$ such that 
\begin{eqnarray}
\label{ellipticity}
\sum_{i,j=1}^dD_{ij}(x)\xi_i\xi_j\geq c_1|\xi|^2, \quad \xi\in \mathbb{R}^d,\quad x\in\overline{\Lambda}.
\end{eqnarray}
As in \cite{Antonio1,Suzuki} we introduce two spaces $\mathbb{H}$ and $V$, such that $\mathbb{H}\subset V$; the two spaces depend on the boundary 
conditions and the domain of the operator $A$. For  Dirichlet (or first-type) boundary conditions we take 
\begin{eqnarray*}
V=\mathbb{H}=H^1_0(\Lambda)=\overline{C^{\infty}_{c}(\Lambda)}^{H^1(\Lambda)}=\{v\in H^1(\Lambda) : v=0\; \text{on}\; \partial \Lambda\}.
\end{eqnarray*}
For Robin (third-type) boundary condition and  Neumann (second-type) boundary condition, which is a special case of Robin boundary condition, we take $V=H^1(\Lambda)$
\begin{eqnarray*}
\mathbb{H}=\{v\in H^2(\Lambda) : \partial v/\partial \mathtt{v}_{ A}+\alpha_0v=0,\; \text{on}\; \partial \Lambda\}, \quad \alpha_0\in\mathbb{R},
\end{eqnarray*}
where $\partial v/\partial \mathtt{v}_{ A}$ is the normal derivative of $v$ and $\mathtt{v}_{ A}$ is the exterior pointing normal $n=(n_i)$ to the boundary of $A$, given by
\begin{eqnarray*}
\partial v/\partial\mathtt{v}_{A}=\sum_{i,j=1}^dn_i(x)D_{ij}(x)\dfrac{\partial v}{\partial x_j},\,\,\; x \in \partial \Lambda.
\end{eqnarray*}
Using the Green's formula and the boundary conditions, we obtain the following corresponding bilinear form associated to $A$  
\begin{eqnarray*}
a(u,v)=\int_{\Lambda}\left(\sum_{i,j=1}^dD_{ij}\dfrac{\partial u}{\partial x_i}\dfrac{\partial v}{\partial x_j}+\sum_{i=1}^dq_i\dfrac{\partial u}{\partial x_i}v\right)dx, \quad u,v\in V,
\end{eqnarray*}
for Dirichlet and Neumann boundary conditions, and  
\begin{eqnarray*}
a(u,v)=\int_{\Lambda}\left(\sum_{i,j=1}^dD_{ij}\dfrac{\partial u}{\partial x_i}\dfrac{\partial v}{\partial x_j}+\sum_{i=1}^dq_i\dfrac{\partial u}{\partial x_i}v\right)dx+\int_{\partial\Lambda}\alpha_0uvdx, \quad u,v\in V,
\end{eqnarray*}
for Robin boundary conditions. Using  G\aa rding's inequality (see e.g., \cite{ATthesis}), it holds that there exist two positive constants $c_0$ and $\lambda_0$ such that
\begin{eqnarray}
\label{ellip1}
a(v,v)\geq \lambda_0\Vert v \Vert^2_{1}-c_0\Vert v\Vert^2, \quad v\in V.
\end{eqnarray}
By adding and substracting $c_0Xdt$ in both sides of \eqref{model}, we obtain a new linear operator
 still denoted by $A$, and the corresponding  bilinear form is also still denoted by $a$. Therefore, the following coercivity property holds
\begin{eqnarray}
\label{ellip2}
a(v,v)\geq \lambda_0\Vert v\Vert^2_1,\quad v\in V.
\end{eqnarray}
Note that the expression of the nonlinear term $F$ has changed as we included the term $c_0X$ in a new nonlinear term that we still denote by  $F$. The coercivity property (\ref{ellip2}) implies that $A$ is sectorial in $L^{2}(\Lambda)$, i.e. there exist $C_{1},\, \theta \in (\frac{1}{2}\pi,\pi)$ such that
\begin{eqnarray*}
 \Vert (\lambda I -A )^{-1} \Vert_{L(L^{2}(\Lambda))} \leq \dfrac{C_{1}}{\vert \lambda \vert },\;\quad \quad
\lambda \in S_{\theta},
\end{eqnarray*}
where $S_{\theta}:=\left\lbrace  \lambda \in \mathbb{C} :  \lambda=\rho e^{i \phi},\; \rho>0,\;0\leq \vert \phi\vert \leq \theta \right\rbrace $ (see \cite{Henry}).
 Then  $-A$ is the infinitesimal generator of a bounded analytic semigroup $S(t)=:e^{-t A}$  on $L^{2}(\Lambda)$  such that
\begin{eqnarray*}
S(t)= e^{-t A}=\dfrac{1}{2 \pi i}\int_{\mathcal{\gamma}_A} e^{ t\lambda}(\lambda I -A)^{-1}d \lambda,\quad t>0,
\end{eqnarray*}
where $\mathcal{\gamma}_A$  denotes a path that surrounds the spectrum of $-A $.
The coercivity  property \eqref{ellip2} also implies that $A$ is a positive operator and its fractional powers are well defined  for any $\alpha>0,$ by
\begin{equation}
\label{fractional}
 \left\{\begin{array}{rcl}
         A^{-\alpha} & =& \frac{1}{\Gamma(\alpha)}\displaystyle\int_0^\infty  t^{\alpha-1}{\rm e}^{-tA}dt,\\
         A^{\alpha} & = & (A^{-\alpha})^{-1},
        \end{array}\right.
\end{equation}
where $\Gamma(\alpha)$ is the Gamma function (see e.g., \cite{Henry}).
Following \cite{Larsson2,Antonio1,Suzuki}, we characterize the domain of the operator $A^{r/2}$ denoted by $\mathcal{D}(A^{r/2})$, $r\in\{1,2\}$ with the following equivalence of norms
\begin{eqnarray*}
\mathcal{D}(A^{r/2})=\mathbb{H}\cap H^{r}(\Lambda), \quad \text{ (for Dirichlet boundary conditions)},\\
\mathcal{D}(A)=\mathbb{H}, \quad \mathcal{D}(A^{1/2})=H^1(\Lambda), \quad \text{(for Robin boundary conditions)}.
\end{eqnarray*}
Endowed with the norm $\Vert A^{r/2}.\Vert$, $\mathcal{D}\left(A^{r/2}\right)$ is a Banach space, see e.g., \cite{Henry}.  The following equivalence of norms holds $\Vert v\Vert_{H^1(\Lambda)}\equiv \Vert A^{r/2}v\Vert=:\Vert v\Vert_r$ for any $v\in\mathcal{D}(A^{r/2})$. For $\gamma\geq 0$, the following Sobolev embedding holds
\begin{eqnarray}
\label{embedd0}
\mathcal{D}\left(A^{\gamma/2}\right)\hookrightarrow \mathcal{C}=C(\overline{\Lambda}, \mathbb{R}),\quad \gamma>\frac{d}{2},\quad d\in\{1,2,3\}.
\end{eqnarray}
Using the embedding \eqref{embedd0} and the fact that the Sobolev constant is independent of the function, the following Sobolev embedding holds
\begin{eqnarray}
\label{embedd0a}
L^{2p}\left(\Omega, \mathcal{D}\left(A^{\gamma/2}\right)\right)\hookrightarrow L^{2p}\left(\Omega, C(\overline{\Lambda}, \mathbb{R})\right),\quad \gamma>\frac{d}{2},\quad p\geq 1,\quad d\in\{1,2,3\}.
\end{eqnarray}

In order to ensure the existence and the uniqueness  solution of \eqref{model} and for the purpose of the convergence analysis, we make the following assumptions.
\begin{Assumption}
\label{assumption1}\textbf{[Initial data]}
The initial data  $X_0\in L^{2p}\left(\Omega, \mathcal{D}\left(A^{\beta/2}\right)\right)\cap L^{2p}\left(\Omega, \mathcal{C}(\overline{\Lambda}, \mathbb{R})\right)$, for $\beta\in\left(\frac{d}{2}, 2\right]$ and  for any $p\in[1, \infty)$.
\end{Assumption}
\begin{Assumption}\textbf{[One-sided Lipschitz condition]}
\label{assumption2}
There exists a positive constant $L_0$ such that the nonlinear function $F: H\longrightarrow H$ satisfies the following estimate
\begin{eqnarray}
\left\langle u-v, F(u)-F(v)\right\rangle \leq L_0\Vert u-v\Vert^2,\quad u,v\in H.
\end{eqnarray} 
 \end{Assumption}
 
 \begin{Assumption}
 \label{assumption3}
 There exists $\varphi: \mathbb{R}\longrightarrow \mathbb{R}$, polynomial of odd degree and negative leading coefficient such that the nonlinear operator $F$ is defined as the Nemytskii operator
 \begin{eqnarray*}
 F\left(u\right)(x)=\varphi\left(u(x)\right),\quad u\in H,\quad x\in \Lambda.
 \end{eqnarray*}
 \end{Assumption}
 
 \begin{Assumption}\textbf{[Covariance operator]}
 \label{assumption4}
 The covariance operator $Q: H\longrightarrow H$ is  satisfies the following estimate
 \begin{eqnarray*}
 \left\Vert A^{\frac{\beta-1}{2}}Q^{\frac{1}{2}}\right\Vert_{\mathcal{L}_2(H)}<\infty,
 \end{eqnarray*}
 where $\beta$ is defined in \assref{assumption1}.
 \end{Assumption}

\begin{lemma}\textbf{[Polynomial growth]}
\label{remarkassumption3}
Under \assref{assumption3}, there exist two constants and  $L_1, c_1\in[0, \infty)$ such that the nonlinear function $F$  satisfies the following 
\begin{eqnarray}
\label{polynome1}
\Vert F(u)\Vert\leq L_1+ L_1\Vert u\Vert\left(1+\Vert u\Vert^{c_1}_{\mathcal{C}}\right), \quad \Vert F(u)\Vert_{\mathcal{C}}\leq L_1+L_1\Vert u\Vert_{\mathcal{C}}^{c_2}\quad u \in H\cap C(\overline{\Lambda}, \mathbb{R}),\\
\label{polynome2}
\quad \Vert F(u)-F(v)\Vert\leq L_1\Vert u-v\Vert\left(1+\Vert u\Vert^{c_1}_{\mathcal{C}}+\Vert v\Vert^{c_1}_{\mathcal{C}}\right),\quad u, v\in H\cap C(\overline{\Lambda},\mathbb{R}).
\end{eqnarray}
 Moreover, $F$ is
 differentiable with polynomial growth derivative, i.e. there exist two constants $L_2, c_2\in[0, \infty)$ such that
\begin{eqnarray}
\label{polynome3}
\Vert F'(u)v\Vert_{L(H)}\leq L_2\Vert v\Vert\left(1+\Vert u\Vert^{c_2}_{\mathcal{C}}\right),\quad u, v\in H\cap \mathcal{C}(\overline{\Lambda}, \mathbb{R}).
\end{eqnarray} 
 \end{lemma}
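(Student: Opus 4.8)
The plan is to reduce every bound to an elementary one‑variable estimate for the polynomial $\varphi$ and then integrate over $\Lambda$, using the supremum norm $\Vert\cdot\Vert_{\mathcal{C}}$ to absorb all high powers while retaining a single factor inside the $L^2$ integral. Writing $\varphi(\xi)=\sum_{k=0}^{2m+1}a_k\xi^k$ with $a_{2m+1}<0$ (the degree being odd), I would first record three pointwise bounds valid for all $\xi\in\R$, obtained just by comparing $|\xi|^k$ with $1$ and with the top power: the crude bound $|\varphi(\xi)|\leq C(1+|\xi|^{2m+1})$, the factored bound $|\varphi(\xi)|\leq C+C|\xi|(1+|\xi|^{2m})$ (pulling one factor $|\xi|$ out of every monomial of positive degree), and $|\varphi'(\xi)|\leq C(1+|\xi|^{2m})$, since $\varphi'$ has degree $2m$.

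For \eqref{polynome1}, the sup‑norm estimate is immediate from the crude bound, since $\Vert F(u)\Vert_{\mathcal{C}}=\sup_{x}|\varphi(u(x))|\leq C+C\Vert u\Vert_{\mathcal{C}}^{2m+1}$. For the $L^2$ estimate I insert the factored bound into $\Vert F(u)\Vert^2=\int_\Lambda|\varphi(u(x))|^2\,dx$ and estimate $|u(x)|^{2m}\leq\Vert u\Vert_{\mathcal{C}}^{2m}$ pointwise, leaving a single factor $|u(x)|$ to be integrated; this yields $\Vert F(u)\Vert\leq C|\Lambda|^{1/2}+C(1+\Vert u\Vert_{\mathcal{C}}^{2m})\Vert u\Vert$, so $c_1=2m$. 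For \eqref{polynome2} I apply the mean value theorem pointwise, $\varphi(u(x))-\varphi(v(x))=\varphi'(\zeta_x)(u(x)-v(x))$ with $\zeta_x$ between $u(x)$ and $v(x)$, bound $|\zeta_x|\leq\max\{\Vert u\Vert_{\mathcal{C}},\Vert v\Vert_{\mathcal{C}}\}$, use the third pointwise bound for $\varphi'$, and integrate; this gives the claim with the same exponent $c_1=2m$.

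For the derivative bound \eqref{polynome3} I would first identify the Fr\'echet derivative of the Nemytskii operator as the multiplication operator $(F'(u)v)(x)=\varphi'(u(x))\,v(x)$, which is well defined precisely because $u\in C(\overline{\Lambda},\R)$ makes $\varphi'(u(\cdot))$ bounded. Then $\Vert F'(u)v\Vert^2=\int_\Lambda|\varphi'(u(x))|^2|v(x)|^2\,dx\leq(\sup_x|\varphi'(u(x))|)^2\Vert v\Vert^2$, and the third pointwise bound gives $\Vert F'(u)v\Vert\leq C(1+\Vert u\Vert_{\mathcal{C}}^{2m})\Vert v\Vert$, i.e. $c_2=2m$.

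The individual computations are routine; the one genuinely delicate point is justifying differentiability with the stated derivative rather than merely postulating it. I would establish this by a direct difference‑quotient argument based on a first‑order Taylor expansion of $\varphi$ with integral remainder, controlling the remainder's polynomial growth by $\Vert\cdot\Vert_{\mathcal{C}}$ so that $F(u+h)-F(u)-F'(u)h$ is $o(\Vert h\Vert)$ in $H$, uniformly on sup‑norm‑bounded sets. This is exactly the step where the interplay between the $L^2$ norm (in which the remainder must be small) and the $C(\overline{\Lambda},\R)$ norm (which alone controls the high powers) must be handled with care; everything else follows mechanically from the three pointwise polynomial estimates above.
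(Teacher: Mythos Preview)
Your proposal is correct and follows essentially the same route as the paper: reduce each estimate to a pointwise polynomial bound for $\varphi$ (or $\varphi'$), absorb the high powers via $\Vert\cdot\Vert_{\mathcal{C}}$, and integrate the remaining single factor in $L^2$. The only cosmetic difference is that for \eqref{polynome2} the paper uses the algebraic factorization $a^n-b^n=(a-b)\sum_{i=0}^{n-1}a^ib^{n-1-i}$ where you use the mean value theorem; both lead to the same bound with the same exponent. Your added discussion of why $F'(u)v=\varphi'(u)\cdot v$ is indeed the Fr\'echet derivative (via a Taylor remainder argument) goes beyond what the paper writes --- the paper simply asserts this formula --- and is a welcome point of rigor.
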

\begin{proof}
Let us assume without loss of generality that $\varphi$ is polynomial of degree $l>1$, that is
\begin{eqnarray}
\varphi(x)=\sum_{i=0}^la_ix^i,\quad x\in\mathbb{R}.
\end{eqnarray}
Note that the proofs  in the cases $l=0, 1$ are obvious. 
For any $u\in H\cap \mathcal{C}(\overline{\Lambda}, \mathbb{R})$, using traingle inequality and the fact $\left(\sum\limits_{i=0}^lc_i\right)^2\leq (l+1)\sum\limits_{i=0}^lc_i^2$, $c_i\geq 0$, we obtain
\begin{eqnarray*}
\Vert F(u)\Vert^2&=&\int_{\Lambda}\vert F(u)(x)\vert^2dx=\int_{\Lambda}\vert \varphi(u(x))\vert^2\leq (l+1)\sum_{i=0}^l\vert a_i\vert^2\int_{\Lambda}\vert u(x)\vert^{2i}dx\nonumber\\
&\leq&(l+1)\vert a_0\vert^2+(l+1)\vert a_1\vert\int_{\Lambda}\vert u(x)\vert^2dx\nonumber\\
&+&(l+1)\max_{2\leq i\leq l}\sup_{x\in\overline{\Lambda}}\vert u(x)\vert^{2i-2}\sum_{i=2}^l\vert a_i\vert^2\int_{\Lambda}\vert u(x)\vert^2dx\nonumber\\
&\leq&(l+1)\vert a_0\vert^2+(l+1)\vert a_1\vert^2\Vert u\Vert^2+(l+1)\max_{2\leq i\leq 2}\Vert u\Vert^{2i-2}_{\mathcal{C}}\left(\max_{2\leq i\leq l}\vert a_i\vert^2\right)\Vert u\Vert^2\nonumber\\
&\leq&(l+1)\vert a_0\vert^2+(l+1)\vert a_1\vert^2\Vert u\Vert^2+(l+1)\max_{2\leq i\leq 2}\left(\Vert u\Vert_{\mathcal{C}}+1\right)^{2i-2}\left(\max_{2\leq i\leq l}\vert a_i\vert^2\right)\Vert u\Vert^2\nonumber\\
&\leq&(l+1)\vert a_0\vert^2+(l+1)\vert a_1\vert^2\Vert u\Vert^2+(l+1)2^{2l-3}\left(\Vert u\Vert_{\mathcal{C}}^{2l-2}+1\right)\left(\max_{2\leq i\leq l}\vert a_i\vert^2\right)\Vert u\Vert^2\nonumber\\
&\leq& L_1+L_1\Vert u\Vert^2\left(1+\Vert u\Vert^{2l-2}_{\mathcal{C}}\right).
\end{eqnarray*}
This completes the proof of the first estimate of \eqref{polynome1}. The proof of the second estimate of \eqref{polynome1} is similar to the first one.  The proof of \eqref{polynome2} is similar to that of \eqref{polynome1} by using the following well known fact
\begin{eqnarray*}
a^n-b^n=(a-b)\sum_{i=0}^{n-1}a^ib^{n-1-i},\quad a, b\in \mathbb{R},\quad n\geq 1.
\end{eqnarray*}
The proof of \eqref{polynome3} is similar to that of \eqref{polynome1} by noticing that $F'(u)(v)(x)=\varphi'(u(x)).v(x)$, for any $u, v\in H\cap \mathcal{C}(\overline{\Lambda}, \mathbb{R})$ and $x\in\Lambda$. 
\end{proof}

\begin{proposition}
\label{propexistence}
Let Assumptions \ref{assumption2},  \ref{assumption3} and \ref{assumption4} be satisfied. If $X_0$ is an $\mathcal{F}_0$-measurable $H$-valued random variable, 
then there exists a unique mild solution $X$ giving by \eqref{mild1}.
Moreover the solution belongs to $\mathcal{C}\left([0, +\infty), \mathcal{C}(\overline{\Lambda}, \mathbb{R}\right)$
and for any $p\geq 1$, there exists a constant $C=C(p,T, X_0)>0$ such that 
\begin{eqnarray}
\label{prior1}
\sup_{t\in[0,T]}\mathbb{E}\left[\Vert X(t)\Vert^{2p}_{\mathcal{C}(\overline{\Lambda},\mathbb{R})}\right]\leq C.
\end{eqnarray}
\end{proposition}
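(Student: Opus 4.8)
The plan is to decompose the solution as $X(t)=Y(t)+Z(t)$, where $Z(t):=\int_0^t S(t-s)\,dW(s)$ is the stochastic convolution, so that the remainder satisfies, for almost every fixed $\omega$, the random but \emph{pathwise} integral equation
\begin{eqnarray*}
Y(t)=S(t)X_0+\int_0^t S(t-s)\,F\big(Y(s)+Z(s)\big)\,ds .
\end{eqnarray*}
This removes the Itô integral from the dynamics and reduces the analysis to a deterministic semilinear parabolic equation driven by the forcing $Z(\cdot,\omega)$. The payoff is that, although $F$ is only locally Lipschitz, it is well behaved on $\mathcal{C}(\overline{\Lambda},\mathbb{R})$ by \lemref{remarkassumption3}, provided $Z(\cdot,\omega)$ and $X_0(\omega)$ live in $\mathcal{C}(\overline{\Lambda},\mathbb{R})$.

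First I would establish the regularity of $Z$ by the factorization method, writing $Z(t)=\tfrac{\sin(\pi\alpha)}{\pi}\int_0^t(t-s)^{\alpha-1}S(t-s)Z_\alpha(s)\,ds$ with $Z_\alpha(s)=\int_0^s(s-\sigma)^{-\alpha}S(s-\sigma)\,dW(\sigma)$. Applying the Burkh\"older--Davis--Gundy inequality \eqref{Davis1} together with \assref{assumption4} and the analytic smoothing bound $\|A^{\gamma/2}S(\tau)\|_{L(H)}\leq C\tau^{-\gamma/2}$, one bounds $\sup_{s\leq T}\mathbb{E}\|A^{\gamma/2}Z_\alpha(s)\|^{2p}$ for any $\gamma<\beta$ and suitably small $\alpha\in(0,1)$ (the integrability exponent $2p$ chosen so that $\alpha>1/(2p)$). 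The smoothing action of $S$ in the outer factorization integral then gives $Z\in\mathcal{C}([0,T],\mathcal{D}(A^{\gamma/2}))$ almost surely, and since $\beta>d/2$ by \assref{assumption1} we may fix $d/2<\gamma<\beta$, so the embedding \eqref{embedd0a} promotes this to $Z\in\mathcal{C}([0,T],\mathcal{C}(\overline{\Lambda},\mathbb{R}))$ with $\sup_{t\leq T}\mathbb{E}\|Z(t)\|_{\mathcal{C}}^{2p}<\infty$ for every $p\geq1$.

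Next I would treat the pathwise equation for $Y$. Fixing $\omega$ with $M(\omega):=\sup_{s\leq T}\|Z(s,\omega)\|_{\mathcal{C}}<\infty$ and $X_0(\omega)\in\mathcal{C}$, the local Lipschitz estimates \eqref{polynome2}--\eqref{polynome3} let me run a Banach fixed-point argument in $\mathcal{C}([0,\tau],\mathcal{C})$ on a short interval, yielding a unique local mild solution that extends to a unique maximal solution on some $[0,\tau^*)$; uniqueness for the full problem follows because $X\mapsto Y=X-Z$ is invertible and $Z$ is common to all solutions. The crux, and the main obstacle, is to show $\tau^*>T$ by ruling out finite-time blow-up in the sup-norm. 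Here I would exploit the sign condition in \assref{assumption3}: because $\varphi$ has odd degree and negative leading coefficient, for every $M\geq0$ there is a constant $c_M$, of polynomial growth in $M$, with $y\,\varphi(y+z)\leq c_M(1+y^2)$ for all $y\in\mathbb{R}$, $|z|\leq M$. Testing the equation for $Y$ against $|Y|^{2q-2}Y$ and combining this dissipativity of the reaction term with the coercivity \eqref{ellip2} would produce $L^{2q}$ a priori bounds, uniform in $q$, which an Alikakos-type iteration $q\to\infty$ upgrades to
\begin{eqnarray*}
\sup_{t\in[0,\tau^*)\cap[0,T]}\|Y(t)\|_{\mathcal{C}}\leq \Psi\big(M(\omega),\,\|X_0(\omega)\|_{\mathcal{C}}\big),
\end{eqnarray*}
with $\Psi$ of polynomial growth. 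I expect this step to be genuinely harder than in the self-adjoint case, since the first-order (advection) part of the non-self-adjoint operator $A$ obstructs a clean dissipative energy estimate and must be absorbed carefully; the one-sided Lipschitz condition \assref{assumption2} only controls the $L^2$-energy, and it is the sign of the leading coefficient that closes the higher-order bounds. This estimate keeps the sup-norm finite, so $\tau^*>T$ and the solution is global; continuity of $Y$, hence of $X=Y+Z$, in $\mathcal{C}(\overline{\Lambda},\mathbb{R})$ comes from the fixed-point construction.

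Finally, the moment bound \eqref{prior1} follows by assembling the pieces. From $X=Y+Z$ and the a priori estimate one has $\|X(t)\|_{\mathcal{C}}\leq \Psi\big(M(\omega),\|X_0(\omega)\|_{\mathcal{C}}\big)+M(\omega)$ uniformly in $t\leq T$; since $\Psi$ grows polynomially, raising to the power $2p$, taking expectation, and invoking the finiteness of $\sup_{s\leq T}\mathbb{E}\|Z(s)\|_{\mathcal{C}}^{2q}$ for all $q\geq1$ together with the integrability of $X_0$ in $\mathcal{C}$ from \assref{assumption1} yields $\sup_{t\leq T}\mathbb{E}\|X(t)\|_{\mathcal{C}}^{2p}\leq C$, as claimed.
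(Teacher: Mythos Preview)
Your outline is sound and would lead to a correct proof, but the route you take for the $\mathcal{C}$-a priori bound on $Y$ differs from the paper's. The paper does not argue directly; it invokes \cite[Theorem~7.7]{Prato} with $E=\mathcal{C}(\overline{\Lambda},\mathbb{R})$ and simply checks its hypotheses. The crucial one is the dissipativity condition $\langle F(u+v),u^*\rangle_{E^*}\leq a(\|v\|_{\mathcal{C}})(1+\|u\|_{\mathcal{C}})$ for $u^*\in\partial\|u\|_{\mathcal{C}}$, which, using the explicit subdifferential of the sup-norm, reduces to the scalar inequality $\mathrm{sgn}(\xi)\,\varphi(\xi+\eta)\leq a(|\eta|)(1+|\xi|)$; this follows immediately from the odd degree and negative leading coefficient of $\varphi$. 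In effect the paper obtains the sup-norm bound by a maximum-principle type argument encoded in the subdifferential machinery, so the advection part of the non-self-adjoint $A$ causes no extra difficulty once one knows that $-A$ generates an analytic (contraction) semigroup on $\mathcal{C}$.

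By contrast, your proposal is to test against $|Y|^{2q-2}Y$ and run an Alikakos $L^{2q}\to L^\infty$ iteration. This is also correct in spirit and has the advantage of being self-contained, but the step you flag as ``genuinely harder'' is real: when you integrate by parts, the first-order term produces $\tfrac{1}{2q}\int(\mathrm{div}\,\mathbf{q})|Y|^{2q}$ (plus boundary contributions), and closing the iteration cleanly requires control on $\mathrm{div}\,\mathbf{q}$ that the paper does not explicitly assume ($q_i\in L^\infty$ only). The subdifferential route sidesteps this entirely, which is what the citation to \cite{Prato} buys.
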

\begin{proof}
The proof can be found in \cite[Theorem 7.7]{Prato} by taking $E=\mathcal{C}=\mathcal{C}(\overline{\Lambda}, \mathbb{R})$ and $H=L^2(\Lambda)$.  In fact, let $E^*$ be the topological dual of $E$ and $\langle .,.\rangle_{E^*}$  the duality pairing between $E^*$ and $E$. From \cite[Section Appendix A, Section A.5.2]{Prato} it holds that $-A$ generates a bounded  analytic semigroup $S(t)$ on $E$. Therefore from the Hille-Yosida theorem \cite[Theorem A.9 (i)]{Prato}  it follows that $\Vert e^{-tA_n}\Vert_{L(E)}\leq 1$, $t\geq 0$, where $A_n$, $n\in\mathbb{N}$ are the Yosida approximations of $A$ and $e^{-tA_n}$ is the semigroup generated by $-A_n$. Since $-A_n$, $n\in\mathbb{N}$ generates a contraction $C_0$-semigroup, it follows from \cite[Example D.8]{Prato} that $\langle -A_nu,u^*\rangle_{E^*}\leq 0$ for all $u\in E$ and $u^*\in \partial \Vert u\Vert_{\mathcal{C}}$, where $ \partial \Vert u\Vert_{\mathcal{C}}$ is the subdifferential of $\Vert u\Vert_{\mathcal{C}}$, defined in \cite[Appendix D]{Prato}.  Hence to prove that \cite[Assumption 7.4 (ii)]{Prato} is fulfilled, it is enough to prove that $\langle F(u+v), u^*\rangle_{E^*} \leq a(\Vert v\Vert_{\mathcal{C}})\left(1+\Vert u\Vert_{\mathcal{C}}\right)$,  for any $u, v\in E$ and $u^*\in \partial \Vert u\Vert_{\mathcal{C}}$. Note that to prove the later estimate it is enough to use the well-known characterization of the subdifferential of the norm in $E$, as in  \cite[Example 7.8]{Prato} or \cite[Examples D.7 and D.3]{Prato} and only prove that
\begin{eqnarray}
\label{dissis1}
\varphi(\xi+\eta)\leq\text{sgn}(\xi)a(\vert \eta\vert)\left(1+\vert\xi\vert\right),\quad \xi, \eta\in \mathbb{R}.
\end{eqnarray}
where $\text{sign}(\eta)=1$ if $\eta\geq 0$ and $\text{sign}(\eta)=-1$ if $\eta> 0$. Note that one can easily check that
\begin{eqnarray*}
\varphi(\xi+\eta)\leq \sup_{\alpha>0}(\alpha+\eta),\;\xi>0,\;\eta\in \mathbb{R} \quad \text{and}\quad 
-\varphi(\xi+\eta)\leq \sup_{\alpha<0}(\alpha+\eta),\; \xi<0,\, \eta\in \mathbb{R},
\end{eqnarray*} 
which prove that \eqref{dissis1} holds and hence \cite[Assumption 7.4 (ii)]{Prato} holds. Note that from \rmref{remarkassumption3} \cite[Assumption 7.4 (i)]{Prato} holds and obviously \cite[Assumption 7.3 (ii)]{Prato} holds. Finally applying \cite[Theorem 7.7 (ii)]{Prato} completes the proof of \propref{propexistence}. 
\end{proof}

\subsection{Fully discrete scheme and main result }
\label{numericalscheme}
Let us  first perform the space  approximation of  problem \eqref{model}.  We start by   discretizing  our domain $\Lambda$ in finite triangulation.
Let $\mathcal{T}_h$ be a triangulation with maximal length $h$. Let $V_h \subset V$ denote the space of continuous functions that are piecewise 
linear over the triangulation $\mathcal{T}_h$. We consider the projection $P_h$ and the discrete operator $A_h$  defined  respectively from  $L^2(\Lambda)$ to $V_h$ and from $V_h$ to $V_h$ by 
\begin{eqnarray}
\label{discreteop}
\langle P_hu,\chi\rangle=\langle u,\chi\rangle, \quad  \chi\in V_h,\, u\in H, \quad \text{and}\quad 
\langle A_h\phi,\chi\rangle=a(\phi,\chi),\quad  \phi,\chi\in V_h.
\end{eqnarray}
 The discrete operator $-A_h$   is also a generator 
of an analytic semigroup $S_h(t) : =e^{-tA_h}$. 
The semi-discrete version  in space  of problem \eqref{model} consists of finding $X^h(t)\in V_h$ such that 
\begin{eqnarray}
\label{semi1}
dX^h(t)+A_hX^h(t)dt=P_hF(X^h(t))dt+P_hdW(t), \quad X^h(0)=P_hX_0,\quad t\in(0,T].
\end{eqnarray}
Note that $S_h(t)$ and $P_hF$ satisfy the same assumptions as $S(t)$ and $F$ respectively, therefore as in \propref{propexistence}, \eqref{semi1} has a unique mild solution $X^h$, which  belongs to $\mathcal{C}\left((0, +\infty), \mathcal{C}(\overline{\Lambda}, \mathbb{R}\right)$
and for any $p\geq 1$, there exists a constant $C=C(p,T, X_0)>0$ such that 
\begin{eqnarray}
\label{prior2}
\sup_{t\in[0,T]}\mathbb{E}\left[\Vert X^h(t)\Vert^{2p}_{\mathcal{C}(\overline{\Lambda},\mathbb{R})}\right]\leq C,\quad h>0.
\end{eqnarray}
 The mild form of $X^h(t)$ is given as follows
\begin{eqnarray}
\label{mild2}
X^h(t)=S_h(t)X^h(0)+\int_0^tS_h(t-s)P_hF(X^h(s))ds+\int_0^tS_h(t-s)P_hdW(s),\quad t\in[0,T].
\end{eqnarray}
Let $t_m=m\Delta t\in[0,T]$, where  $\Delta t=T/M$ and $M\in\mathbb{N}$.  Applying the backward Euler method to \eqref{semi1} yields the following scheme
\begin{eqnarray}
\label{scheme1}
\left\{\begin{array}{ll}
X^h_0=P_hX_0,\\
X^h_{m+1}=S_{h, \Delta t}X^h_m+\Delta tS_{h,\Delta t}P_hF(X^h_{m+1})+S_{h,\Delta t}P_h\Delta W_m,\quad m=0,1,\cdots, M-1,
\end{array}
\right.
\end{eqnarray}
where $\Delta W_m$ and $S_{h,\Delta t}$ are given respectively  by
\begin{eqnarray*}
\Delta W_m:=W(t_{m+1})-W(t_m)\quad \text{and}\quad S_{h, \Delta t}:=(\mathbf{I}+\Delta tA_h)^{-1}.
\end{eqnarray*}
With the numerical method in hand, we can state our strong convergence result.

 Throughout this paper, $C$ is a positive constant independent of $h$, $m$, $M$ and $\Delta t$; that may change from one place to another.
 \begin{theorem}\textbf{[Main Result]}
\label{mainresult1}
Let  $X$ be  the mild  solution of   problem \eqref{model} and  $X^h_m$ be the numerical approximation defined in \eqref{scheme1}.
Let Assumptions    \ref{assumption1}, \ref{assumption2}, \ref{assumption3} and \ref{assumption4} be fulfilled. Then for any $p\geq 1$ the following error estimate holds 
\begin{eqnarray*}
 \Vert X(t_m)-\xi^h_m\Vert_{L^{2p}(\Omega,H)}\leq C\left(h^{\beta}+\Delta t^{\frac{\beta}{2}-\epsilon}\right),\quad m=0,1,\cdots,M.
 \end{eqnarray*}
 where $\epsilon$ is a positive number and $\beta$ is defined in \assref{assumption1}. 
\end{theorem}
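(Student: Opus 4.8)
The plan is to insert the spatially semidiscrete solution $X^h$ of \eqref{semi1} and split the error at each node $t_m$ as
\begin{eqnarray*}
X(t_m)-X^h_m=\left(X(t_m)-X^h(t_m)\right)+\left(X^h(t_m)-X^h_m\right),
\end{eqnarray*}
bounding the spatial defect $X(t_m)-X^h(t_m)$ and the temporal defect $X^h(t_m)-X^h_m$ separately in $L^{2p}(\Omega,H)$; the former should yield the $h^{\beta}$ contribution and the latter the $\Dt^{\frac{\beta}{2}-\epsilon}$ contribution. The three structural ingredients I rely on throughout are the uniform $\mathcal{C}(\overline{\Lambda},\mathbb{R})$-moment bounds \eqref{prior1} and \eqref{prior2}, which turn the polynomial-growth estimates of \lemref{remarkassumption3} into genuinely usable bounds once the $\mathcal{C}$-norm factors are controlled; \assref{assumption4} together with the Burkh\"older--Davis--Gundy inequality \eqref{Davis1} for every stochastic convolution; and the smoothing and deterministic error estimates for the analytic semigroups $S(t)$, $S_h(t)=e^{-tA_h}$ and the rational approximation $S_{h,\Dt}=(\mathbf{I}+\Dt A_h)^{-1}$.

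For the spatial defect I would subtract the mild forms \eqref{mild1} and \eqref{mild2}, obtaining an initial-value term, a deterministic convolution of $F$ and a stochastic convolution, each carrying the finite element error operator $S(t)-S_h(t)P_h$. The initial-value and stochastic parts are controlled by the classical nonsmooth-data finite element estimate for $S(t)-S_h(t)P_h$ (with the time singularity integrable once \assref{assumption1} and \assref{assumption4} supply $\beta/2$ spatial regularity), the stochastic one via \eqref{Davis1}. For the nonlinear convolution I would use \eqref{polynome2}, Cauchy--Schwarz in $\Omega$ and the $\mathcal{C}$-norm bounds \eqref{prior1}--\eqref{prior2} to dominate the polynomial factors, leaving a term proportional to $\Vert X(s)-X^h(s)\Vert_{L^{2p}(\Omega,H)}$ that is absorbed by a Gr\"onwall argument.

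The temporal defect is the more delicate half. First I would establish, using the coercivity of $A_h$ built into $S_{h,\Dt}$ and the one-sided Lipschitz bound \assref{assumption2}, an a priori estimate $\sup_m\mathbb{E}\Vert X^h_m\Vert_{\mathcal{C}}^{2p}\leq C$ uniform in $h$, $\Dt$ and $m$, which is what makes \eqref{polynome2} applicable to the numerical solution. Iterating \eqref{scheme1} gives the discrete mild form
\begin{eqnarray*}
X^h_m=S_{h,\Dt}^{m}P_hX_0+\Dt\sum_{k=0}^{m-1}S_{h,\Dt}^{m-k}P_hF(X^h_{k+1})+\sum_{k=0}^{m-1}S_{h,\Dt}^{m-k}P_h\Delta W_k,
\end{eqnarray*}
which I compare with \eqref{mild2}. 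Subtracting produces an initial term governed by the rational-approximation error $\Vert(S_h(t_m)-S_{h,\Dt}^{m})A_h^{-\beta/2}\Vert\leq C\Dt^{\beta/2}$, a stochastic term handled again by \eqref{Davis1} and \assref{assumption4} (this is where summing the integrable time singularity forces the arbitrarily small loss $\Dt^{-\epsilon}$), and a nonlinear term that I split into a time-regularity piece $F(X^h(s))-F(X^h(t_{k+1}))$, a semigroup-error piece, and a piece proportional to the running error $X^h(t_{k+1})-X^h_{k+1}$; the last is closed by a discrete Gr\"onwall inequality.

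The main obstacle, as in all non-globally-Lipschitz settings, is the nonlinear term: because $\varphi$ is only one-sided Lipschitz with polynomial growth, neither the a priori bound on $X^h_m$ nor the Gr\"onwall closure is automatic. The crux is to propagate the $\mathcal{C}(\overline{\Lambda},\mathbb{R})$ moment bounds to the fully discrete scheme \emph{uniformly} in $h$ and $\Dt$, since without them the polynomial factors in \eqref{polynome2} cannot be separated from the error. The absence of self-adjointness compounds this: the deterministic semigroup and rational-approximation error estimates must be derived from analytic-semigroup calculus and the sectoriality of $A$ rather than from a spectral decomposition, so the symmetric energy arguments usually available for the Laplacian are unavailable and have to be replaced by resolvent and $A^{\gamma}$-smoothing bounds.
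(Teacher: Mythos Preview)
Your overall splitting into spatial and temporal defects matches the paper, and your treatment of the spatial part via mild forms and \eqref{polynome2} is viable because both $X$ and $X^h$ carry the $\mathcal{C}$-moment bounds \eqref{prior1}--\eqref{prior2}. The temporal part, however, contains a genuine gap.

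You correctly identify the crux as propagating uniform $\mathcal{C}(\overline{\Lambda},\mathbb{R})$ moment bounds to the fully discrete iterates $X^h_m$, and then plan to close a discrete Gr\"onwall loop through the local Lipschitz estimate \eqref{polynome2}. But the mechanism you propose for those bounds---``coercivity of $A_h$ built into $S_{h,\Dt}$ and the one-sided Lipschitz bound \assref{assumption2}''---only delivers uniform $L^{2p}(\Omega,H)$ control (this is precisely \lemref{Momentboundlemma}), not $L^{2p}(\Omega,\mathcal{C})$. To upgrade to $\mathcal{C}$ via Sobolev embedding you would need $\mathcal{D}(A_h^{\beta/2})$ bounds on $X^h_m$, which through the discrete mild form forces control of $\Vert F(X^h_{j+1})\Vert_{L^{2p}(\Omega,H)}$; by \eqref{polynome1} this in turn requires $\mathcal{C}$-bounds on $X^h_{j+1}$, and the argument is circular. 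Without those bounds, the factor $1+\Vert X^h_{k+1}\Vert_{\mathcal{C}}^{c_1}$ in \eqref{polynome2} cannot be separated from the running error and your Gr\"onwall closure does not go through.

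The paper avoids this circularity entirely. It never establishes $\mathcal{C}$-bounds on $X^h_m$. Instead it inserts an auxiliary iteration $\widetilde{X}^h_m$ obtained by running backward Euler with the drift frozen at the \emph{semidiscrete} solution, i.e.\ with $P_hF(X^h(t_m))$ in place of $P_hF(X^h_{m+1})$; since $\widetilde{X}^h_m$ is built from $X^h(t)$, it inherits $\mathcal{C}$-bounds from \eqref{prior2}. The piece $X^h(t_m)-\widetilde{X}^h_m$ is then handled by your mild-form comparison (this is where the Taylor expansion of $F$ and the smoothing estimates enter), while the remaining piece $\widetilde e^h_m:=\widetilde{X}^h_m-X^h_m$ satisfies a purely deterministic recursion on which one tests with $\widetilde e^h_m$ and uses \assref{assumption2} \emph{directly} in the form $\langle F(\widetilde{X}^h_m)-F(X^h_m),\widetilde e^h_m\rangle\leq L_0\Vert\widetilde e^h_m\Vert^2$, bypassing \eqref{polynome2} altogether for the numerical iterates. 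The only place \eqref{polynome2} is invoked in this closure is on the pair $(X^h(t_m),\widetilde{X}^h_m)$, both of which have the required $\mathcal{C}$-moments. The same auxiliary-process-plus-energy template is used for the spatial error (Theorem~\ref{Spaceerror}). Your proposal would work if you replace the attempted $\mathcal{C}$-bound on $X^h_m$ by this auxiliary-process device.
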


\section{Proof of the main result}
\label{Convergence}
The proof of the main result need some preparatory results. 
\subsection{Preparatory results}
The following lemma will be useful for the space error estimate.
 \begin{lemma}  
 \label{RecallTambue}
 \begin{enumerate}
 \item[(i)] For any $r\in[0,2]$, it holds that
 \begin{eqnarray}
\label{addis1}
\Vert \left(S(t)-S_h(t)P_h\right)v\Vert \leq Ch^{r}t^{-(r-\alpha)/2}\Vert v\Vert_{\alpha},\quad  \alpha\leq r,\quad v\in \mathcal{D}\left(A^{\frac{\alpha}{2}}\right),\quad t>0.
\end{eqnarray}
\item[(ii)] For any $0\leq \gamma\leq 2$, it holds that
\begin{eqnarray*}
\left(\int_0^t\Vert \left(S(t)-S_h(t)P_h\right)v\Vert^2ds\right)^{1/2}\leq Ch^{\gamma}\Vert v\Vert_{\gamma-1},\quad v\in\mathcal{D}\left(A^{\frac{\gamma-1}{2}}\right),\quad t>0.
\end{eqnarray*}
\item[(iii)] Let $0\leq\rho\leq 1$. Then it holds that
\begin{eqnarray*}
\left\Vert\int_0^t\left(S(t)-S_h(t)P_h\right)vds\right\Vert\leq Ch^{2-\rho}\Vert v\Vert_{-\rho},\quad v\in\mathcal{D}\left(A^{-\rho}\right),\quad t>0.
\end{eqnarray*}
 \end{enumerate}
\end{lemma}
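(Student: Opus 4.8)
The three bounds are purely deterministic finite element error estimates for the non-self-adjoint parabolic problem, so the plan is to prove them through the analytic-semigroup machinery rather than through any eigenfunction expansion, since $A$ is not assumed self-adjoint. Writing $F_h(t):=S(t)-S_h(t)P_h$ for the error operator, I would build everything on two ingredients: first, the smoothing estimates $\|A^{\sigma/2}S(t)\|_{L(H)}\leq Ct^{-\sigma/2}$ together with their discrete counterparts $\|A_h^{\sigma/2}S_h(t)P_h\|_{L(H)}\leq Ct^{-\sigma/2}$ holding uniformly in $h$, which follow from $-A$ and $-A_h$ generating bounded analytic semigroups on $L^2(\Lambda)$ by the coercivity \eqref{ellip2} and G\aa rding's inequality; and second, the elliptic (Ritz) projection $R_h:V\to V_h$ defined by $a(R_hv-v,\chi)=0$ for all $\chi\in V_h$ (equivalently $A_hR_h=P_hA$, so that $A_h^{-1}P_h=R_hA^{-1}$), together with the classical elliptic error estimate $\|(I-R_h)v\|\leq Ch^{\mu}\|v\|_{\mu}$ for $1\leq\mu\leq 2$ and the analogous $L^2$-projection bound for $P_h$.

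For part (i), I would reduce to the two endpoint cases and handle the intermediate ones through the resolvent. The smooth-data endpoint $\alpha=r$ reads $\|F_h(t)v\|\leq Ch^{r}\|v\|_r$ and the nonsmooth-data endpoint $\alpha=0$ reads $\|F_h(t)v\|\leq Ch^{r}t^{-r/2}\|v\|$. In the non-self-adjoint setting these, and the intermediate weights, are cleanest from the contour representation
\[
F_h(t)=\frac{1}{2\pi i}\int_{\gamma_A}e^{\lambda t}\left[(\lambda I-A)^{-1}-(\lambda I-A_h)^{-1}P_h\right]d\lambda
\]
combined with the resolvent error estimate $\|[(\lambda I-A)^{-1}-(\lambda I-A_h)^{-1}P_h]v\|\leq Ch^{r}|\lambda|^{(r-\alpha)/2-1}\|v\|_\alpha$, which follows from the elliptic estimate applied to the continuous and discrete resolvent equations (and interpolation in $\alpha$). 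Estimating the integral on a sector scaled so that the active part has $|\lambda|\sim t^{-1}$ then produces exactly the weight $t^{-(r-\alpha)/2}$ claimed in (i) for each $\alpha\in[0,r]$.

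For part (ii) a direct insertion of (i) fails: with $r=\gamma$ and $\alpha=\gamma-1$ it produces the factor $s^{-1/2}$, whose square $s^{-1}$ is not integrable at $s=0$, so a genuine $L^2$-in-time argument is required (I read the integrand as $F_h(s)v=(S(s)-S_h(s)P_h)v$). I would split $F_h(s)v=\rho(s)+\theta(s)$ with $\rho(s)=(R_h-I)S(s)v$ and $\theta(s)=R_hS(s)v-S_h(s)P_hv$; the elliptic piece $\int_0^t\|\rho(s)\|^2ds$ is bounded directly by the Ritz estimate and the smoothing of $S$, while $\theta$ solves the discrete parabolic equation $\theta'+A_h\theta=-P_h\rho'$ and is controlled by testing with $\theta$ and integrating in time, so that the time-singular contributions become square-integrable with the optimal power $h^{\gamma}$ and the correct data norm $\|v\|_{\gamma-1}$. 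For part (iii) I would instead use the semigroup identities $\int_0^tS(s)v\,ds=A^{-1}(I-S(t))v$ and $\int_0^tS_h(s)P_hv\,ds=A_h^{-1}(I-S_h(t))P_hv$; setting $w=A^{-1}v$, so that $\|w\|_{2-\rho}=\|v\|_{-\rho}$, these reduce the integrated error to
\[
\int_0^tF_h(s)v\,ds=(I-R_h)w-F_h(t)w-S_h(t)(P_h-R_h)w.
\]
Each term lives at the regularity level $2-\rho\in[1,2]$: the first is the Ritz error, the second is the smooth-data case of (i) with $r=\alpha=2-\rho$, and the third is bounded by $\|S_h(t)\|_{L(H)}\leq C$ times $\|(P_h-R_h)w\|$; all three contribute $Ch^{2-\rho}\|w\|_{2-\rho}=Ch^{2-\rho}\|v\|_{-\rho}$.

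The main obstacle throughout is the lack of self-adjointness: one cannot diagonalize $A$, so every bound must pass through its sectoriality and, crucially, through discrete resolvent and smoothing estimates that are uniform in $h$. Establishing the uniform discrete resolvent bound $\|(\lambda I-A_h)^{-1}P_h\|_{L(H)}\leq C/|\lambda|$ on a fixed sector, and the resolvent error estimate used in (i), is the delicate technical core; once these are available, parts (i)--(iii) assemble as above. Since these are by now standard facts for this class of operators, I would expect the statement here to be invoked by citation rather than reproved from scratch.
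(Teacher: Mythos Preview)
Your closing expectation is exactly right: the paper does not reprove this lemma but simply cites \cite[Lemma 6.1]{Antonio2} for (i)--(ii) and \cite[Lemma 3.2 (iv)]{Antjd2} for (iii). The sketch you give is sound and is essentially the approach those references follow for the non-self-adjoint case---contour/resolvent estimates for the pointwise bound (i), the Ritz splitting $F_h(s)v=(R_h-I)S(s)v+\theta(s)$ with an energy argument on the discrete piece for the $L^2$-in-time bound (ii), and the identity $A_h^{-1}P_h=R_hA^{-1}$ reducing the integrated error in (iii) to elliptic projection errors plus the smooth-data case of (i). Your reading of the integrand in (ii) as $S(s)-S_h(s)P_h$ rather than the literal $S(t)-S_h(t)P_h$ is also the intended one.
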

\begin{proof}
The proof of (i)-(ii) can be found in \cite[Lemma 6.1]{Antonio2}. The proof of (iii) can be found in \cite[Lemma 3.2 (iv)]{Antjd2}.
\end{proof}

The following lemma provides a version of \cite[Lemma 3.2 (iii)]{Stig1} for not necessary self-adjoint operator. 
\lemref{Sharpestimates} is key to achieve optimal regularity results of the mild solution \eqref{mild1}. This result will be  useful to achieve  optimal error estimate in space.
\begin{lemma}\cite[Lemma 2.1]{Antjd3}
\label{Sharpestimates}
For any  $0\leq\rho\leq 1$  and $0\leq\gamma\leq 2$, there exists a constant $C$ such that
\begin{eqnarray}
\label{raphael1}
\int_{t_1}^{t_2}\left\Vert A^{\rho/2}S(t_2-r)\right\Vert^2_{L(H)}dr\leq C(t_2-t_1)^{1-\rho},\quad 0\leq t_1\leq t_2\leq T,\\
\label{raphael2}
\int_{t_1}^{t_2}\left\Vert A^{\gamma/2}S(t_2-r)\right\Vert_{L(H)}dr\leq C(t_2-t_1)^{1-\gamma/2},\quad 0\leq t_1\leq t_2\leq T.
\end{eqnarray}
Note that \eqref{raphael1} and \eqref{raphael2} also hold when $A$ and $S(t)$ are replaced by their discrete approximations $A_h$ and $S_h(t)$ respectively.
\end{lemma}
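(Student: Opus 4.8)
The plan is to reduce both inequalities to the single smoothing property of the analytic semigroup generated by $-A$. Since the coercivity estimate \eqref{ellip2} makes $A$ sectorial and positive on $H=L^2(\Lambda)$, and since $S(t)=e^{-tA}$ is defined through the functional calculus used in the excerpt, the standard parabolic smoothing estimate applies: for every $\mu\geq 0$ there exists $C_\mu>0$ such that
\begin{eqnarray}
\label{smooth}
\left\Vert A^{\mu}S(t)\right\Vert_{L(H)}\leq C_\mu t^{-\mu},\quad t>0.
\end{eqnarray}
This bound holds for general sectorial (not necessarily self-adjoint) generators, so the absence of self-adjointness is not an issue here; moreover $A^{\mu}$ and $S(t)$ commute on $\mathcal{D}(A^{\mu})$ by \eqref{fractional} and the resolvent representation of $S(t)$. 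Estimate \eqref{smooth} is the only analytic ingredient I would invoke.

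For \eqref{raphael1}, I would take $\mu=\rho/2$ in \eqref{smooth}, square, and integrate. Writing $s=t_2-r$ gives
\begin{eqnarray*}
\int_{t_1}^{t_2}\left\Vert A^{\rho/2}S(t_2-r)\right\Vert^2_{L(H)}dr\leq C_{\rho/2}^2\int_0^{t_2-t_1}s^{-\rho}ds=\frac{C_{\rho/2}^2}{1-\rho}(t_2-t_1)^{1-\rho},
\end{eqnarray*}
which is finite for $0\leq\rho<1$ and yields the claim with $C=C_{\rho/2}^2/(1-\rho)$. Estimate \eqref{raphael2} is identical with $\mu=\gamma/2$: the substitution reduces it to $\int_0^{t_2-t_1}s^{-\gamma/2}ds=(t_2-t_1)^{1-\gamma/2}/(1-\gamma/2)$, finite for $0\leq\gamma<2$. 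I would flag that the scalar integrals force the strict ranges $\rho<1$ and $\gamma<2$, with the constant degenerating like $(1-\rho)^{-1}$ as $\rho\uparrow 1$; the endpoints in the $L(H)$-operator norm cannot be reached from the pointwise bound \eqref{smooth} alone and would need a separate argument.

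For the discrete statement I would verify that \eqref{smooth} holds for the pair $(A_h,S_h)$ with a constant $C_\mu$ independent of $h$. This is where the real work lies: one must show that $-A_h$ generates an analytic semigroup on $V_h$ whose sectoriality angle and constant are uniform in $h$. This follows from the discrete coercivity $a(\chi,\chi)\geq\lambda_0\Vert\chi\Vert_1^2$ inherited by $A_h$ on $V_h$ together with the uniform discrete resolvent bound, giving $\Vert A_h^{\mu}S_h(t)\Vert_{L(H)}\leq C_\mu t^{-\mu}$ with $C_\mu$ independent of $h$. Once this is in hand, the two computations above carry over verbatim with $A,S$ replaced by $A_h,S_h$. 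The main obstacle is thus not the integration but the uniform-in-$h$ analyticity of $A_h$; the smoothing estimate \eqref{smooth} itself is classical.
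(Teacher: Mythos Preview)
The paper does not prove this lemma; it simply records the statement and cites \cite[Lemma~2.1]{Antjd3} for the argument. So there is no in-paper proof to compare against, and your direct verification via the analytic smoothing bound $\Vert A^{\mu}S(t)\Vert_{L(H)}\leq C_\mu t^{-\mu}$ is exactly the standard route one expects that reference to take. Your computations for $0\le\rho<1$ and $0\le\gamma<2$ are correct, and your remark that the uniform-in-$h$ discrete smoothing estimate is the only nontrivial input for the $A_h$, $S_h$ version is on point; in this paper's setting that uniform estimate is available (it is used repeatedly elsewhere, e.g.\ in the proofs of \lemref{embeddinglemma} and \thmref{Regularity}).

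Your caveat about the endpoints is not just a technicality but a genuine obstruction for \eqref{raphael2}: with the norm \emph{inside} the integral, $\int_{0}^{\tau}\Vert AS(s)\Vert_{L(H)}\,ds$ diverges whenever $A$ is unbounded (already in the self-adjoint case $\Vert AS(s)\Vert_{L(H)}=\sup_i \lambda_i e^{-\lambda_i s}\sim C s^{-1}$), so the case $\gamma=2$ as literally written cannot hold with a finite $C$. The paper only ever invokes \eqref{raphael2} with $\gamma=\beta\in(d/2,2]$, and when $\beta=2$ the usage (e.g.\ in \eqref{space3}) is harmless only if one reads the inequality on the open range or, equivalently, accepts an $\epsilon$-loss; this is consistent with the $\Delta t^{\beta/2-\epsilon}$ rates stated in \thmref{mainresult1}. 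For \eqref{raphael1} at $\rho=1$ the same divergence occurs for the integral of the squared operator norm, though the \emph{vector} estimate $\int_0^\tau\Vert A^{1/2}S(s)v\Vert^2 ds\leq C\Vert v\Vert^2$ (which is what is actually needed in applications such as \eqref{bruitest3}) does hold and follows from energy arguments rather than from the pointwise smoothing bound. So your flag is well placed: the endpoint claims require either a reinterpretation as vector-valued estimates or a restriction to strict inequality, and the pointwise bound alone cannot close them.
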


\begin{lemma}
\label{embeddinglemma}
\begin{itemize}
\item[(i)] For any $\gamma\in[0, 2]$, the following norm are equivalent
\begin{eqnarray}
\label{embedd1}
\Vert A^{\gamma/2}v^h\Vert\approx\Vert A_h^{\gamma/2}v^h\Vert,\quad v^h\in \mathcal{D}\left(A^{\frac{\gamma}{2}}\right)\cap V_h.
\end{eqnarray}
\item[(ii)] Under Assumptions \ref{assumption3} and \ref{assumption4}, the following a priori estimates hold
\begin{eqnarray*}
\Vert A_h^{\beta/2}W_{A_h}(t)\Vert_{L^{2p}(\Omega, H)}\leq C,\;\left\Vert W_{A_h}(t)\right\Vert_{L^{2p}(\Omega, \mathcal{C})}\leq C,\; \left\Vert F(W_{A_h}(t))\right\Vert_{L^{2p}(\Omega, H)}\leq C,\;t\in[0, T], 
\end{eqnarray*}
where the stochastic convolution $W_{A_h}(t)$ is given by
\begin{eqnarray}
\label{bruitest1}
W_{A_h}(t):=\int_0^tS_h(t-s)P_hdW(s).
\end{eqnarray}
\item[(iii)] Under Assumptions \ref{assumption3} and \ref{assumption4}, the following a priori estimates hold
\begin{eqnarray*}
\Vert A^{\beta/2}W_{A}(t)\Vert_{L^{2p}(\Omega, H)}\leq C,\;\left\Vert W_{A}(t)\right\Vert_{L^{2p}(\Omega, \mathcal{C})}\leq C,\; \left\Vert F(W_{A}(t))\right\Vert_{L^p(\Omega, H)}\leq C,\;t\in[0, T], 
\end{eqnarray*}
where the stochastic convolution $W_{A}(t)$ is given by
\begin{eqnarray}
\label{bruitest1}
W_{A}(t):=\int_0^tS(t-s)dW(s).
\end{eqnarray}
\end{itemize}
\end{lemma}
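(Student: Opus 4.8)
The plan is to treat the three items in turn, observing that (ii) and (iii) share one computational skeleton and that (ii) additionally feeds on (i).

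For (i), I would establish the equivalence at the endpoints $\gamma\in\{0,1,2\}$ and then interpolate. The case $\gamma=0$ is the identity $\Vert v^h\Vert=\Vert v^h\Vert$. The crucial structural observation is that $A_h$ is defined through the \emph{same} bilinear form $a$ as $A$ (see \eqref{discreteop}), so the coercivity \eqref{ellip2} and the boundedness $|a(u,v)|\le M\Vert u\Vert_1\Vert v\Vert_1$ pass verbatim to $V_h$ with constants $\lambda_0,M$ independent of $h$; consequently $A$ and $A_h$ are uniformly sectorial, with resolvent bounds of the type already recorded for the semigroups in \lemref{Sharpestimates}. From this uniform sectoriality the fractional-power graph norms $\Vert A^{1/2}\cdot\Vert$ and $\Vert A_h^{1/2}\cdot\Vert$ are each equivalent, with $h$-independent constants, to the form norm $\Vert\cdot\Vert_1$ on $V$ and on $V_h$ respectively (the continuous equivalence $\Vert A^{1/2}\cdot\Vert\approx\Vert\cdot\Vert_1$ is already recorded in the setting above), which gives $\gamma=1$; the case $\gamma=2$ follows by comparing $\Vert Av^h\Vert$ with $\Vert A_hv^h\Vert$ on $\mathcal{D}(A)\cap V_h$. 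Interpolation then fills in all intermediate $\gamma\in(0,1)\cup(1,2)$. The non-self-adjointness is exactly what makes this step delicate: the spectral theorem is unavailable, so the identification of the fractional-power domains must proceed through resolvent estimates rather than eigenfunction expansions, and one must check that no constant degenerates as $h\to0$.

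For (iii) I would start from $A^{\beta/2}W_A(t)=\int_0^tA^{\beta/2}S(t-s)\,dW(s)$ and apply the Burkh\"{o}lder--Davis--Gundy inequality \eqref{Davis1}. Since the integrand is deterministic, the resulting $L^{2p}(\Omega,\mathcal{L}_2)$-norm reduces to the $\mathcal{L}_2$-norm, and I would factor, using that $S(t-s)$ commutes with fractional powers of $A$,
\[
A^{\beta/2}S(t-s)Q^{1/2}=\bigl(A^{1/2}S(t-s)\bigr)\bigl(A^{(\beta-1)/2}Q^{1/2}\bigr).
\]
The trace inequality \eqref{trace1} then yields $\Vert A^{\beta/2}S(t-s)Q^{1/2}\Vert_{\mathcal{L}_2}\le\Vert A^{1/2}S(t-s)\Vert_{L(H)}\Vert A^{(\beta-1)/2}Q^{1/2}\Vert_{\mathcal{L}_2}$, where the second factor is finite by \assref{assumption4}, and the time integral of $\Vert A^{1/2}S(t-s)\Vert_{L(H)}^2$ is bounded by a constant via \eqref{raphael1} with $\rho=1$. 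This gives the $A^{\beta/2}$-bound; the $\mathcal{C}$-bound then follows immediately from the Sobolev embedding \eqref{embedd0a} (applicable since $\beta>d/2$), and the $F$-bound from inserting the polynomial growth estimate \eqref{polynome1}, using $\Vert\cdot\Vert\le C\Vert\cdot\Vert_{\mathcal{C}}$ on the bounded domain together with the fact that all moments of $\Vert W_A(t)\Vert_{\mathcal{C}}$ are then controlled.

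For (ii) I would repeat the Burkh\"{o}lder--Davis--Gundy computation with $A_h,S_h$ in place of $A,S$ and with $P_hQ^{1/2}$ in place of $Q^{1/2}$, using the discrete version of \eqref{raphael1} (guaranteed by \lemref{Sharpestimates}) to bound $\int_0^t\Vert A_h^{1/2}S_h(t-s)\Vert_{L(H)}^2ds$. The only genuinely new point is to control $\Vert A_h^{(\beta-1)/2}P_hQ^{1/2}\Vert_{\mathcal{L}_2}$ by $C\Vert A^{(\beta-1)/2}Q^{1/2}\Vert_{\mathcal{L}_2}$ uniformly in $h$; I would obtain this from the $h$-independent mapping property $\Vert A_h^{s}P_hv\Vert\le C\Vert A^{s}v\Vert$ for $s=(\beta-1)/2$, applied to an orthonormal basis and summed, which rests on (i) combined with the stability of $P_h$. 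The $\mathcal{C}$-bound for $W_{A_h}$ then combines the embedding \eqref{embedd0a} with the norm equivalence \eqref{embedd1} of (i) to pass from $\Vert A^{\beta/2}\cdot\Vert$ to $\Vert A_h^{\beta/2}\cdot\Vert$, and the $F$-bound is identical to the continuous case. The main obstacle throughout is the $h$-independence of all constants --- concentrated in (i) and in the uniform bound on $\Vert A_h^{(\beta-1)/2}P_hQ^{1/2}\Vert_{\mathcal{L}_2}$ --- since the non-self-adjoint setting rules out the clean spectral arguments available in the self-adjoint literature.
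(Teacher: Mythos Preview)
Your proposal is correct and takes essentially the same route as the paper: endpoint equivalences plus interpolation for (i), and for (ii)--(iii) the Burkh\"{o}lder--Davis--Gundy inequality, the factorization $A_h^{\beta/2}S_h(t-s)P_hQ^{1/2}=(A_h^{1/2}S_h(t-s))(A_h^{(\beta-1)/2}P_hQ^{1/2})$ together with \lemref{Sharpestimates}, followed by the Sobolev embedding \eqref{embedd0a} (combined with \eqref{embedd1} in the discrete case) and the polynomial growth \eqref{polynome1}. The only cosmetic differences are that the paper interpolates directly between $\gamma=0$ and $\gamma=2$ (rather than also isolating $\gamma=1$) and cites an external lemma for the uniform bound $\Vert A_h^{(\beta-1)/2}P_hQ^{1/2}\Vert_{\mathcal{L}_2}\le C$, whereas you sketch it via (i) and stability of $P_h$.
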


\begin{proof}
Let us start with the proof of (i). Note that \eqref{embedd1} obviously holds for $\gamma=0$.  Using the equivalence of norms (see e.g., \cite{Larsson2})
\begin{eqnarray}
\label{rev1}
\Vert Av\Vert\approx \Vert v\Vert_2,\quad v\in V\cap H^2(\Lambda), 
\end{eqnarray}
and noticing that for $v^h\in \mathcal{D}(A)\cap V_h$, we have $A_h^{-1}v^h\in \mathcal{D}(A_h)\cap V_h\subset V\cap H^2(\Lambda)$, we obtain
\begin{eqnarray}
\label{rev2}
\left\Vert A A_h^{-1}v^h\right\Vert\leq C\Vert A_h^{-1}v^h\Vert_2,\quad v^h\in \mathcal{D}(A)\cap V_h\subset V\cap H^2(\Lambda).
\end{eqnarray}
Using the definition of $A_h$ \eqref{discreteop} and the Cauchy Schwartz's inequality, it follows that
\begin{eqnarray}
\label{rev4}
\Vert A_hv^h\Vert^2=\left\langle A_hv^h, A_hv^h\right\rangle=\left\langle Av^h, A_hv^h\right\rangle\leq \Vert Av^h\Vert\Vert A_hv^h\Vert.
\end{eqnarray}
It follows from \eqref{rev4} and \eqref{rev1} that
\begin{eqnarray}
\label{rev5}
\Vert A_hv^h\Vert\leq \Vert Av^h\Vert\leq C\Vert v^h\Vert_2,\quad v^h\in V_h\cap \mathcal{D}(A).
\end{eqnarray}
On the order hand, using the coercivity property \eqref{ellip2}, the definition of $A_h$ \eqref{discreteop} and Cauchy Schwartz's inequality, for $v^h\in V_h$, it holds that
\begin{eqnarray}
\label{rev6}
\Vert v^h\Vert^2_1&\leq& \frac{1}{\lambda_0}a(v^h,v^h)=\frac{1}{\lambda_0}\left\langle A_hv^h, v^h\right\rangle=\frac{1}{\lambda_0}\left\langle A_h^{1/2}v^h, (A_h^*)^{1/2}v^h\right\rangle\leq\frac{1}{\lambda_0}\left\Vert A_h^{1/2}v^h\right\Vert\left\Vert (A_h^*)^{1/2}v^h\right\Vert\nonumber\\
&\leq& \frac{1}{\lambda_0}\left\Vert A_h^{1/2}v^h\right\Vert\left\Vert (A_h^*)^{1/2}A_h^{-1/2}\right\Vert_{L(H)}\left\Vert A_h^{1/2}v^h\right\Vert\leq C\left\Vert A_h^{1/2}v^h\right\Vert^2,
\end{eqnarray}
where at the last step we employed \cite[Lemma 3.1]{Antonio2}. Thus from \eqref{rev6} we obtain
\begin{eqnarray}
\label{rev7}
\Vert v^h\Vert_1 \leq C\left\Vert A_h^{1/2}v^h\right\Vert,\quad v_h\in V_h.
\end{eqnarray}
Using the definition of the $H^2(\Lambda)$ norm and \eqref{rev7}, for any $v^h\in V_h\cap\mathcal{D}(A)\subset H^2(\Lambda)$, it holds that
\begin{eqnarray}
\label{rev8}
\Vert v^h\Vert_2^2=\Vert v^h\Vert^2+\left\Vert\nabla v^h\right\Vert_1^2\leq \left\Vert v^h\right\Vert^2+C\left\Vert A_h^{1/2}\nabla v^h\right\Vert^2=\Vert v^h\Vert^2+C\left\Vert \nabla \left(A_h^{1/2}v^h\right)\right\Vert^2,
\end{eqnarray}
where at the last step we have  used the fact that $A_h^{1/2}$ is a linear operator and thus commutes with weak derivatives, see e.g., \cite[(26)]{Antjd1}. Using again \eqref{rev7} yields
\begin{eqnarray}
\label{rev9}
\Vert v^h\Vert_2^2&\leq& \Vert v^h\Vert^2+\left\Vert A_h^{1/2}v^h\right\Vert_1^2\leq \Vert v^h\Vert^2+C\left\Vert A_h^{1/2}A_h^{1/2}v^h\right\Vert^2\nonumber\\
&\leq& \Vert v^h\Vert^2+C\left\Vert A_hv^h\right\Vert^2\leq C'\left\Vert A_hv^h\right\Vert^2.
\end{eqnarray}
Combining \eqref{rev9} and \eqref{rev5} yields
\begin{eqnarray}
\label{rev10}
\Vert v^h\Vert_2\approx \Vert A_hv^h\Vert,\quad v^h\in V_h\cap \mathcal{D}(A).
\end{eqnarray}
Combining \eqref{rev10} and \eqref{rev1} yields
\begin{eqnarray}
\label{rev11}
\Vert Av^h\Vert\approx\Vert A_hv^h\Vert,\quad v\in \mathcal{D}\left(A\right)\cap V_h.
\end{eqnarray}
Hence \eqref{rev11} proved \eqref{embedd1} for $\gamma=2$. The intermediate cases of \eqref{embedd1} are completed by interpolation theory. Let us now move to the proof of (ii). 
Using the Burkholder-Davis-Gundy inequality \eqref{Davis1}, \cite[Lemma 11]{Antjd1}, and \lemref{Sharpestimates}, it holds that
\begin{eqnarray}
\label{bruitest3}
\left\Vert A_h^{\beta/2}W_{A_h}(t)\right\Vert_{L^{2p}(\Omega, H)}&\leq& C(p)\left(\int_0^t\left\Vert A_h^{\beta/2}S_h(t-s)P_hQ^{\frac{1}{2}}\right\Vert^2_{L^{2p}\left(\Omega,\mathcal{L}_2(H)\right)}ds\right)^{1/2}\nonumber\\
&\leq& C\left(\int_0^t\left\Vert S_h(t-s)A_h^{\frac{1}{2}}\right\Vert^2_{L(H)}\left\Vert A_h^{\frac{\beta-1}{2}}P_hQ^{\frac{1}{2}}\right\Vert^2_{\mathcal{L}_2(H)}ds\right)^{1/2}\nonumber\\
&\leq& C\left(\int_0^t\left\Vert S_h(t-s)A_h^{\frac{1}{2}}\right\Vert^2_{L(H)}ds\right)^{1/2}\leq C.
\end{eqnarray}
 Using the boundedness of $A_h^{-\beta/2}$ and  \eqref{bruitest3}, it holds that 
\begin{eqnarray}
\label{bruitest4}
\Vert W_{A_h}(t)\Vert_{L^{2p}(\Omega, H)}\leq \Vert A_h^{-\beta/2}\Vert_{L(H)}\Vert A_h^{\beta/2}W_{A_h}(t)\Vert_{L^{2p}(\Omega, H)}\leq C.
\end{eqnarray}
Using the Sobolev embbeding \eqref{embedd0a}, \eqref{embedd1} and \eqref{bruitest3}, it follows that $W_{A_h}(t)\in L^{2p}(\Omega,\mathcal{C})$ and
\begin{eqnarray}
\label{bruitest5}
\mathbb{E}\left[\Vert W_{A_h}(t)\Vert^{2p}_{\mathcal{C}}\right]\leq C\mathbb{E}\left[\Vert A^{\beta/2}W_{A_h}(t)\Vert^{2p}\right]\leq C\mathbb{E}\left[\Vert A_h^{\beta/2}W_{A_h}(t)\Vert^{2p}\right]\leq C,\quad t\in[0, T].
\end{eqnarray}
Using \assref{assumption3},  Cauchy-Schwartz's inequality, \eqref{bruitest4} and \eqref{bruitest5} yields
\begin{eqnarray*}
\Vert F\left(W_{A_h}(t)\right)\Vert_{L^{2p}(\Omega, H)}&\leq& \Vert W_{A_h}(t)\Vert_{L^{4p}(\Omega, H)}\left(1+\mathbb{E}\left[\Vert W_{A_h}(t)\Vert^{4pc_1}_{\mathcal{C}}\right]\right)^{1/2}\leq C,\; t\in[0, T].
\end{eqnarray*} 
This completes the proof of (ii). The proof of (iii) is similar to that of (ii).
\end{proof}

\begin{theorem}
\label{Regularity}
Let Assumptions \ref{assumption1}, \ref{assumption2}, \ref{assumption3} and \ref{assumption4} be fulfilled. Then for any $p\geq 1$, the following space regularity holds for the semi-discrete solution $X^h(t)$
\begin{eqnarray}
\label{spaceregular}
\Vert A_h^{\beta/2}X^h(t)\Vert_{L^{2p}(\Omega, H)}\leq C,\quad \Vert X^h(t)\Vert_{L^{2p}(\Omega, \mathcal{C})}\leq C, \quad \Vert F(X^h(t))\Vert_{L^{2p}(\Omega, H)}\leq C,\quad t\in[0, T],
\end{eqnarray}
where $\beta$ comes from \assref{assumption1}. The following time regularity also holds
\begin{eqnarray}
\label{timeregular}
\Vert X^h(t)-X^h(s)\Vert_{L^{2p}(\Omega, H)}\leq C(t-s)^{\min\left(\frac{1}{2}, \frac{\beta}{2}\right)},\quad 0\leq s\leq t\leq T.
\end{eqnarray}
Note that the above space and time regularities still holds when $X^h(t)$ is replaced by $X(t)$.
\end{theorem}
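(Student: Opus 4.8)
The plan is to work entirely from the semi-discrete mild representation \eqref{mild2}, to treat the three bounds of \eqref{spaceregular} in increasing order of difficulty, and to derive the time regularity \eqref{timeregular} separately; the estimates for the exact solution $X$ then follow verbatim upon replacing $(A_h,S_h,P_h)$ by $(A,S,I)$ and invoking the continuous counterparts of the lemmas already stated. I would not reprove the sup-norm bound $\Vert X^h(t)\Vert_{L^{2p}(\Omega,\mathcal{C})}\le C$, uniformly in $h$: this is exactly \eqref{prior2}, obtained from \propref{propexistence} applied to \eqref{semi1}, since $A_h$, $P_hF$ and $P_h$ inherit the hypotheses of $A$, $F$ and the noise. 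Granting it, the third bound in \eqref{spaceregular} is immediate: by the polynomial growth \eqref{polynome1} in \lemref{remarkassumption3} one has $\Vert F(X^h(t))\Vert\le L_1+L_1\Vert X^h(t)\Vert(1+\Vert X^h(t)\Vert_{\mathcal{C}}^{c_1})$, and Cauchy--Schwarz in $\Omega$ together with the $\mathcal{C}$-moment bound (and $\Vert\cdot\Vert\le|\Lambda|^{1/2}\Vert\cdot\Vert_{\mathcal{C}}$) yields $\Vert F(X^h(t))\Vert_{L^{2p}(\Omega,H)}\le C$, exactly as in the closing lines of the proof of \lemref{embeddinglemma}(ii).

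For the first bound in \eqref{spaceregular} I would apply $A_h^{\beta/2}$ to \eqref{mild2} and estimate the three resulting terms. The initial term is $S_h(t)A_h^{\beta/2}P_hX_0$, bounded by $\Vert S_h(t)\Vert_{L(H)}\Vert A_h^{\beta/2}P_hX_0\Vert_{L^{2p}(\Omega,H)}$; using standard stability of $P_h$ and the norm equivalence \eqref{embedd1} this is controlled by $C\Vert A^{\beta/2}X_0\Vert_{L^{2p}(\Omega,H)}$, finite by \assref{assumption1}. The stochastic term $A_h^{\beta/2}W_{A_h}(t)$ is bounded directly by \lemref{embeddinglemma}(ii). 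For the deterministic convolution I would move $A_h^{\beta/2}$ inside and apply the sharp smoothing estimate \eqref{raphael2} with $\gamma=\beta$, giving $\int_0^t\Vert A_h^{\beta/2}S_h(t-s)\Vert_{L(H)}ds\le Ct^{1-\beta/2}$, combined with $\sup_s\Vert F(X^h(s))\Vert_{L^{2p}(\Omega,H)}\le C$. This closes the estimate whenever $\beta<2$.

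The time regularity \eqref{timeregular} I would obtain by writing $X^h(t)-X^h(s)$ from \eqref{mild2} as the sum of $(S_h(t)-S_h(s))P_hX_0$, the difference of the two deterministic convolutions, and $W_{A_h}(t)-W_{A_h}(s)$. For the first I use $S_h(t)-S_h(s)=S_h(s)(S_h(t-s)-I)$ with $\Vert(S_h(t-s)-I)A_h^{-\beta/2}\Vert_{L(H)}\le C(t-s)^{\beta/2}$ and the bounded data, producing $(t-s)^{\beta/2}$. For the convolution I split off $\int_s^t$ (bounded by $C(t-s)$ via the $F$-bound) and rewrite the remaining integrand through $(S_h(t-s)-I)S_h(s-r)$, combining $\Vert(S_h(t-s)-I)A_h^{-\theta}\Vert_{L(H)}\le C(t-s)^\theta$ with the integrable singularity $\Vert A_h^\theta S_h(s-r)\Vert_{L(H)}\le C(s-r)^{-\theta}$, $\theta\in(0,1)$; this gives $(t-s)^\theta$ for any $\theta<1$. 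The stochastic increment is handled by the same splitting after the Burkh\"older--Davis--Gundy inequality \eqref{Davis1}, bounding the Hilbert--Schmidt norms via $\Vert A_h^{(\beta-1)/2}P_hQ^{1/2}\Vert_{\mathcal{L}_2(H)}\le C$ from \assref{assumption4} and the semigroup smoothing; this term is the bottleneck and forces the exponent $\min(\tfrac{1}{2},\tfrac{\beta}{2})$. Since $t-s\le T$, the larger powers are absorbed and \eqref{timeregular} follows.

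The hard part will be the endpoint $\beta=2$ of the first bound in \eqref{spaceregular}. There \eqref{raphael2} degenerates because $\int_0^t\Vert A_hS_h(t-s)\Vert_{L(H)}ds$ diverges, and the non-self-adjointness of $A_h$ forbids the symmetric energy estimate (testing \eqref{semi1} against $A_hX^h$ does not produce $\tfrac{d}{dt}\Vert A_h^{1/2}X^h\Vert^2$). My plan is to bootstrap through a maximal-regularity argument: first establish, from \eqref{timeregular}, the growth bound \eqref{polynome2} and Cauchy--Schwarz in $\Omega$, the temporal H\"older estimate $\Vert F(X^h(t))-F(X^h(s))\Vert_{L^{2p}(\Omega,H)}\le C(t-s)^{\min(1/2,\beta/2)}$; then, writing $A_h\!\int_0^tS_h(t-r)P_hF(X^h(r))dr=(I-S_h(t))P_hF(X^h(t))+\int_0^tA_hS_h(t-r)P_h\big(F(X^h(r))-F(X^h(t))\big)dr$, the first term is bounded by the $F$-bound while the second has the integrable singularity $(t-r)^{\min(1/2,\beta/2)-1}$, which closes the case $\beta=2$. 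Finally, the identical chain of estimates, with the continuous analogues of \lemref{Sharpestimates}, \lemref{embeddinglemma}(iii) and \propref{propexistence}, delivers the same bounds for $X(t)$.
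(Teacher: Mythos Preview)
Your proposal is correct and shares the paper's overall architecture (mild representation, three-term split for \eqref{spaceregular}, three-term split for \eqref{timeregular}), but differs from the paper in three places worth noting. First, the paper opens with an energy argument: it subtracts the stochastic convolution, sets $Y^h=X^h-W_{A_h}$, tests \eqref{sam1} against $Y^h$, and handles the non-self-adjointness of $A_h$ via coercivity and the bound $\Vert(A_h^*)^{1/2}A_h^{-1/2}\Vert_{L(H)}\le C$, arriving at $\Vert X^h(t)\Vert_{L^{2p}(\Omega,H)}\le C$ by Gronwall; you bypass this entirely by reading off the $H$-bound from \eqref{prior2} via $\Vert\cdot\Vert\le|\Lambda|^{1/2}\Vert\cdot\Vert_{\mathcal{C}}$, which is shorter. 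Second, for \eqref{timeregular} the paper restarts the mild representation at time $s$, writing $X^h(t)-X^h(s)=(S_h(t-s)-I)X^h(s)+\ldots$ and then invoking the already-proved bound on $\Vert A_h^{\beta/2}X^h(s)\Vert$; you decompose from time $0$ and use only the regularity of $X_0$, which is slightly longer but avoids any dependence on \eqref{spaceregular}. Third, and most substantively, the paper does \emph{not} single out the endpoint $\beta=2$: for $I_2$ it simply invokes \eqref{raphael2} at $\gamma=\beta$ for all $\beta\in(d/2,2]$, citing the external reference for \lemref{Sharpestimates}. Your maximal-regularity bootstrap (first proving H\"older continuity of $s\mapsto F(X^h(s))$ in $L^{2p}(\Omega,H)$ from \eqref{timeregular} and \eqref{polynome2}, then rewriting $A_h\int_0^tS_h(t-r)P_hF(X^h(r))\,dr$ via the identity that isolates $(I-S_h(t))P_hF(X^h(t))$) is a valid, self-contained treatment of the endpoint that does not rely on the borderline case of \eqref{raphael2}; note that your ordering then requires proving \eqref{timeregular} before the $\beta=2$ case of \eqref{spaceregular}, which is consistent since your version of \eqref{timeregular} does not use \eqref{spaceregular}.
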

\begin{proof}
We start by proving  that 
\begin{eqnarray*}
\Vert X^h(t)\Vert_{L^p(\Omega, H)}\leq C,\quad t\in[0, T].
\end{eqnarray*}
 Let us introduce the following auxiliary  process
\begin{eqnarray*}
Y^h(t):=X^h(t)-W_{A_h}(t)=S_h(t)P_hX_0+\int_0^tS_h(t-s)P_hF(X^h(s))ds,\quad t\in[0,T].
\end{eqnarray*}
Then it follows that $Y^h(t)$ is differentiable with respect to the time and is the solution of the following deterministic problem
\begin{eqnarray}
\label{sam1}
\frac{d}{dt}Y^h(t)+A_hY^h(t)=P_hF\left(Y^h(t)+W_{A_h}(t)\right),\quad Y^h(0)=P_hX_0,\quad t\in(0,T].
\end{eqnarray}
Taking the inner product in both sides of \eqref{sam1}, using \assref{assumption2} and Cauchy-Schwartz's inequality yields
\begin{eqnarray}
\label{sam2}
\frac{1}{2}\frac{d}{ds}\left\Vert Y^h(s)\right\Vert^2+\left\langle A_hY^h(s), Y^h(s)\right\rangle&=&\left\langle F\left(Y^h(s)+W_{A_h}(s)\right)-F(W_{A_h}), Y^h(s)\right\rangle\nonumber\\
&+&\left\langle F(W_{A_h}(s)), Y^h(s)\right\rangle\nonumber\\
&\leq& C\left\Vert Y^h(s)\right\Vert^2+\frac{1}{2}\left\Vert Y^h(s)\right\Vert^2+\frac{1}{2}\left\Vert F\left(W_{A_h}(s)\right)\right\Vert^2.
\end{eqnarray}
Using the coercivity estimate \eqref{ellip2},  the fact that $Y^h(s)\in V_h$ and \eqref{sam2} yields
\begin{eqnarray}
\label{sam3}
\lambda_0\left\Vert Y^h(s)\right\Vert^2_1&\leq& a\left(Y^h(s), Y^h(s)\right)\nonumber\\
&=&-\frac{1}{2}\frac{d}{ds}\left\Vert Y^h(s)\right\Vert^2+\frac{1}{2}\frac{d}{ds}\left\Vert Y^h(s)\right\Vert^2+\left\langle A_hY^h(s), Y^h(s)\right\rangle\nonumber\\
&\leq& -\frac{1}{2}\frac{d}{ds}\left\Vert Y^h(s)\right\Vert^2+C\left\Vert Y^h(s)\right\Vert^2+\frac{1}{2}\left\Vert F(W_{A_h}(s))\right\Vert^2.
\end{eqnarray}
Since  $Y^h(s)\in V_h$, using Cauchy-Schwartz's inequality,   the equivalence of norms \cite[(2.12)]{Larsson2},  \cite[Lemma 3.1]{Antonio2} and \eqref{sam3}, it holds that
\begin{eqnarray}
\label{sam4}
\left\vert\left\langle A_hY^h(s), Y^h(s)\right\rangle\right\vert&=&\left\vert\left\langle A_h^{1/2}Y^h(s), (A_h^*)^{1/2}Y^h(s)\right\rangle\right\vert\leq \frac{1}{2}\left\Vert A_h^{1/2}Y^h(s)\right\Vert^2+\frac{1}{2}\left\Vert (A_h^*)^{1/2}Y^h(s)\right\Vert^2\nonumber\\
&\leq& \frac{C}{2}\left\Vert Y^h(s)\right\Vert^2_1+\frac{1}{2}\left\Vert (A_h^*)^{1/2}A_h^{-1/2}\right\Vert^2_{L(H)}\left\Vert A_h^{1/2}Y^h(s)\right\Vert^2\nonumber\\
&\leq& \frac{C}{2}\left\Vert Y^h(s)\right\Vert^2_1+\frac{C}{2}\left\Vert A_h^{1/2}Y^h(s)\right\Vert^2\leq C\left\Vert Y^h(s)\right\Vert^2_1=\frac{C}{\lambda_0}.\lambda_0\left\Vert Y^h(s)\right\Vert^2_1\nonumber\\
&\leq& -\frac{C}{2\lambda_0}\frac{d}{ds}\left\Vert Y^h(s)\right\Vert^2+C\left\Vert Y^h(s)\right\Vert^2+C\left\Vert F(W_{A_h}(s))\right\Vert^2.
\end{eqnarray}
Note that from \eqref{sam1} we have
\begin{eqnarray}
\label{sam5}
\frac{1}{2}\frac{d}{ds}\Vert Y^h(s)\Vert^2&\leq& C\Vert Y^h(s)\Vert^2+\frac{1}{2}\Vert F(W_{A_h}(s))\Vert^2-\left\langle A_hY^h(s), Y^h(s)\right\rangle\nonumber\\
&\leq& C\Vert Y^h(s)\Vert^2+\frac{1}{2}\Vert F(W_{A_h}(s))\Vert^2+\left\vert\left\langle A_hY^h(s), Y^h(s)\right\rangle\right\vert.
\end{eqnarray}
Substituting \eqref{sam4} in \eqref{sam5} yields
\begin{eqnarray}
\label{sam6}
\left(\frac{1}{2}+\frac{C}{2\lambda_0}\right)\frac{d}{ds}\Vert Y^h(s)\Vert^2\leq  C\Vert Y^h(s)\Vert^2+C\Vert F(W_{A_h}(s))\Vert^2.
\end{eqnarray}
Integrating both sides of \eqref{sam6} over $[0, t]$ and using \lemref{embeddinglemma} (ii) yields
\begin{eqnarray}
\label{sam7}
\Vert Y^h(t)\Vert^2_{L^{2p}(\Omega, H)}&\leq& C\int_0^t\Vert Y^h(s)\Vert^2_{L^{2p}(\Omega, H)}ds+C\int_0^t\Vert F(W_{A_h}(s))\Vert^2_{L^{2p}(\Omega, H)}ds\nonumber\\
&\leq& C\int_0^t\Vert Y^h(s)\Vert^2_{L^{2p}(\Omega, H)}ds+C.
\end{eqnarray}
Applying the  Gronwall's lemma to \eqref{sam7} yields $\Vert Y^h(t)\Vert^2_{L^{2p}(\Omega, H)}\leq C$. Using triangle inequality and \lemref{embeddinglemma} (ii), it follows that
\begin{eqnarray}
\label{sam7a}
\Vert X^h(t)\Vert_{L^{2p}(\Omega, H)}\leq \Vert Y^h(t)\Vert_{L^{2p}(\Omega, H)}+\Vert W_{A_h}(t)\Vert_{L^{2p}(\Omega, H)}\leq C,\quad t\in[0,T].
\end{eqnarray}
Employing \rmref{remarkassumption3}, Cauchy-Schwartz's inequality, \eqref{prior2} and \eqref{sam7a}, it follows that
\begin{eqnarray}
\label{sam7b}
\Vert F(X^h(t))\Vert_{L^{2p}(\Omega, H)}\leq C\Vert X^h(t)\Vert_{L^{4p}(\Omega, H)}\left(1+\sup_{t\in[0,T]}\mathbb{E}\left[\Vert X^h(t)\Vert^{4pc_1}_{\mathcal{C}(\overline{\Lambda}, \mathbb{R})}\right]\right)^{1/4p}\leq C.
\end{eqnarray} 
Let us now move to the estimate of \eqref{spaceregular}.
From the mild solution \eqref{mild2}, and using triangle inequality it follows that
\begin{eqnarray}
\label{space1}
\left\Vert A_h^{\beta/2}X(t)\right\Vert_{L^{2p}(\Omega, H)}&\leq& \left\Vert S_h(t)A_h^{\beta/2}X_0\right\Vert_{L^{2p}(\Omega, H)}+\left\Vert\int_0^tS_h(t-s)A_h^{\beta/2}P_hF(X^h(s))ds\right\Vert_{L^{2p}(\Omega, H)}\nonumber\\
&+&\left\Vert\int_0^tS_h(t-s)A_h^{\beta/2}P_hdW(s)\right\Vert_{L^{2p}(\Omega, H)}=:I_1+I_2+I_3.
\end{eqnarray}
Employing \cite[Lemma 1]{Antjd1} and \assref{assumption1}, it holds that
\begin{eqnarray}
\label{space2}
I_1\leq \Vert S_h(t)\Vert_{L(H)}\left\Vert A^{\beta/2}X_0\right\Vert_{L^{2p}(\Omega, H)}\leq C.
\end{eqnarray}
Using \propref{propexistence}, \eqref{spaceregular}  and \lemref{Sharpestimates}, it holds that
\begin{eqnarray}
\label{space3}
I_2\leq C\int_0^t\left\Vert S_h(t-s)A_h^{\beta/2}\right\Vert_{L(H)}\Vert F(X^h(s))\Vert_{L^{2p}(\Omega, H)}ds\leq C.
\end{eqnarray}
Using the Burkh\"{o}lder-Davis-Gundy's inequality \eqref{Davis1} and \lemref{Sharpestimates}, it holds that
\begin{eqnarray}
\label{space4}
I_3&\leq& \left(\int_0^t\left\Vert S_h(t-s)A_hP_hQ^{\frac{1}{2}}\right\Vert^2_{L^{2p}\left(\Omega,\mathcal{L}_2(H)\right)}ds\right)^{1/2}\nonumber\\
&\leq& \left(\int_0^t\left\Vert S_h(t-s)A_h^{\frac{\beta-1}{2}}\right\Vert_{L(H)}^2\left\Vert A_h^{\frac{\beta-1}{2}}P_hQ^{\frac{1}{2}}\right\Vert^2_{\mathcal{L}_2(H)}ds\right)^{1/2}\nonumber\\
&\leq& C\left(\int_0^t\left\Vert S_h(t-s)A_h^{\frac{\beta-1}{2}}\right\Vert^2_{L(H)}ds\right)^{1/2}\leq C.
\end{eqnarray}
Substituting \eqref{space4}, \eqref{space3} and \eqref{space2} in \eqref{space1} completes the proof of \eqref{spaceregular}. We are left with the proof of \eqref{timeregular}. Using the mild representation of the solution of \eqref{semi1}, it holds that
\begin{eqnarray}
\label{time1}
X^h(t)-X^h(s)=\left(S_h(t-s)-\mathbf{I}\right)X^h(s)+\int_s^tS_h(t-r)P_hF(X^h(\sigma))d\sigma+\int_s^tS_h(t-\sigma)P_hdW(\sigma).
\end{eqnarray}
Using triangle inequality, it follows from \eqref{time1} that
\begin{eqnarray}
\label{time2}
\Vert X^h(t)-X^h(s)\Vert_{L^{2p}(\Omega, H)}&=&\left\Vert\left(S_h(t-s)-\mathbf{I}\right)X^h(s)\right\Vert_{L^{2p}(\Omega, H)}\nonumber\\
&+&\left\Vert\int_s^tS_h(t-\sigma)P_hF(X^h(\sigma))d\sigma\right\Vert_{L^{2p}(\Omega, H)}\nonumber\\
&+&\left\Vert\int_s^tS_h(t-\sigma)P_hdW(\sigma)\right\Vert_{L^{2p}(\Omega, H)}=:II_1+II_2+II_3.
\end{eqnarray}
Using the smoothing properties of the semigroup and \eqref{spaceregular}, it holds that
\begin{eqnarray}
\label{time3}
II_1\leq \left\Vert\left(S_h(t-s)-\mathbf{I}\right)A_h^{-\beta/2}\right\Vert_{L(H)}\left\Vert A_h^{\beta/2}X^h(s)\right\Vert_{L^{2p}(\Omega, H)}\leq C(t-s)^{\beta/2}.
\end{eqnarray}
Using the smoothing properties of the semigroup and \eqref{spaceregular}, it holds that
\begin{eqnarray}
\label{time4}
II_2\leq \int_s^t\Vert S_h(t-\sigma)P_hF(X^h(\sigma))\Vert_{L^{2p}(\Omega, H)}d\sigma\leq C(t-s).
\end{eqnarray}
Using the Burkholder-Davis-Gundy inequality \eqref{Davis1} and \lemref{Sharpestimates}, it holds that
\begin{eqnarray}
\label{time5}
II_3&\leq& \left(\int_s^t\left\Vert S_h(t-\sigma)Q^{\frac{1}{2}}\right\Vert^2_{L^{2p}(\Omega, \mathcal{L}_2(H))}d\sigma\right)^{1/2}\nonumber\\
&\leq& \left(\int_s^t\left\Vert S_h(t-r)A_h^{\frac{1-\beta}{2}}\right\Vert^2_{L(H)}\left\Vert A_h^{\frac{\beta-1}{2}}Q^{\frac{1}{2}}\right\Vert^2_{\mathcal{L}_2(H)}d\sigma\right)^{1/2}\leq C(t-s)^{\min\left(\frac{1}{2}, \frac{\beta}{2}\right)}.
\end{eqnarray}
Substituting \eqref{time5}, \eqref{time4} and \eqref{time2} in \eqref{time2} completes the proof of \eqref{timeregular}.
\end{proof}


\begin{theorem}\textbf{[Space error]} 
\label{Spaceerror}
Let $X(t)$ and $X^h(t)$ be solution of \eqref{model} and \eqref{semi1} respectively. Let Assumptions \ref{assumption1}, \ref{assumption2}, \ref{assumption3} and \ref{assumption4} be fulfilled. Then for any $p\geq 1$ the following  holds
\begin{eqnarray}
\Vert X(t)-X^h(t)\Vert_{L^{2p}(\Omega, H)}\leq Ch^{\beta},\quad t\in[0, T].
\end{eqnarray}
\end{theorem}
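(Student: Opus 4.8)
The plan is to start from the mild representations \eqref{mild1} and \eqref{mild2} and split the error into initial-data, drift and noise contributions:
\begin{eqnarray*}
X(t)-X^h(t)&=&\left(S(t)-S_h(t)P_h\right)X_0\\
&+&\int_0^t\left[S(t-s)F(X(s))-S_h(t-s)P_hF(X^h(s))\right]ds\\
&+&\int_0^t\left(S(t-s)-S_h(t-s)P_h\right)dW(s)=:J_1(t)+J_2(t)+J_3(t).
\end{eqnarray*}
The initial-data term $J_1$ is controlled directly by \lemref{RecallTambue}(i) with $r=\alpha=\beta$, which cancels the temporal weight and gives $\Vert J_1(t)\Vert_{L^{2p}(\Omega,H)}\leq Ch^{\beta}\Vert X_0\Vert_{L^{2p}(\Omega,\mathcal{D}(A^{\beta/2}))}\leq Ch^{\beta}$ by \assref{assumption1}. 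For the stochastic term $J_3$ I would apply the Burkh\"older--Davis--Gundy inequality \eqref{Davis1}, expand the resulting Hilbert--Schmidt norm over an orthonormal basis, and use \lemref{RecallTambue}(ii) with $\gamma=\beta$ on each mode; summing the modes turns the bound into $Ch^{\beta}\Vert A^{(\beta-1)/2}Q^{1/2}\Vert_{\mathcal{L}_2(H)}$, which is finite by \assref{assumption4}, so $\Vert J_3(t)\Vert_{L^{2p}(\Omega,H)}\leq Ch^{\beta}$.

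The deterministic convolution $J_2$ is the heart of the argument, and I would decompose it further as
\begin{eqnarray*}
J_2(t)&=&\int_0^t\left(S(t-s)-S_h(t-s)P_h\right)F(X(s))ds\\
&+&\int_0^tS_h(t-s)P_h\left[F(X(s))-F(X^h(s))\right]ds=:J_{2,1}(t)+J_{2,2}(t).
\end{eqnarray*}
For the consistency part $J_{2,1}$ I would combine the uniform a priori bound $\Vert F(X(s))\Vert_{L^{2p}(\Omega,H)}\leq C$ from \thmref{Regularity} with \lemref{RecallTambue}(i) using $\alpha=0$, $r=\beta$; the integrand then carries the factor $(t-s)^{-\beta/2}$, which is integrable for $\beta<2$, while at the borderline $\beta=2$ I would instead invoke the integrated estimate \lemref{RecallTambue}(iii) with $\rho=0$, so that $\Vert J_{2,1}(t)\Vert_{L^{2p}(\Omega,H)}\leq Ch^{\beta}$ in all cases. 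For the stability part $J_{2,2}$, the discrete semigroup $S_h$ is uniformly bounded, so it is enough to control $F(X(s))-F(X^h(s))$; since $F$ is only locally Lipschitz I would apply \eqref{polynome2} and then H\"older's inequality in $\Omega$, absorbing the polynomial factor $1+\Vert X(s)\Vert^{c_1}_{\mathcal{C}}+\Vert X^h(s)\Vert^{c_1}_{\mathcal{C}}$ into a finite constant by the uniform supremum-norm moment bounds of \propref{propexistence} and \eqref{prior2}. This yields $\Vert J_{2,2}(t)\Vert_{L^{2p}(\Omega,H)}\leq C\int_0^t\Vert X(s)-X^h(s)\Vert_{L^{2p}(\Omega,H)}ds$, where the moment order raised by H\"older is harmless because the asserted estimate is claimed for every $p\geq1$.

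Collecting the three bounds produces an integral inequality of Gronwall type, $\Vert X(t)-X^h(t)\Vert_{L^{2p}(\Omega,H)}\leq Ch^{\beta}+C\int_0^t\Vert X(s)-X^h(s)\Vert_{L^{2p}(\Omega,H)}ds$, and Gronwall's lemma closes the estimate uniformly on $[0,T]$. I expect the main obstacle to be the deterministic convolution $J_2$: one must reconcile the finite-element smoothing estimates of \lemref{RecallTambue} with the non-global Lipschitz drift, which forces reliance on the local-Lipschitz/polynomial-growth estimate \eqref{polynome2} and hence on the uniform $\mathcal{C}$-moment bounds for both $X$ and $X^h$, and in particular one must handle the non-integrable temporal singularity of $J_{2,1}$ at $\beta=2$ through the integrated bound \lemref{RecallTambue}(iii) rather than the pointwise one.
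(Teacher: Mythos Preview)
Your decomposition and the handling of $J_1$, $J_3$, and $J_{2,1}$ are fine, but the closure of $J_{2,2}$ via Gronwall does \emph{not} go through as written. When you apply \eqref{polynome2} and then H\"older in $\Omega$, the right-hand side carries $\Vert X(s)-X^h(s)\Vert_{L^{4p}(\Omega,H)}$ (or $L^{2p(1+\delta)}$ for some $\delta>0$), not $\Vert X(s)-X^h(s)\Vert_{L^{2p}(\Omega,H)}$. Hence your integral inequality reads
\[
\Vert X(t)-X^h(t)\Vert_{L^{2p}(\Omega,H)}\leq Ch^{\beta}+C\int_0^t\Vert X(s)-X^h(s)\Vert_{L^{4p}(\Omega,H)}\,ds,
\]
which is not a Gronwall inequality for a single function. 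Your justification that ``the moment order raised by H\"older is harmless because the asserted estimate is claimed for every $p\geq1$'' is circular: you would be assuming the conclusion at exponent $4p$ in order to prove it at exponent $2p$. The uniform a~priori bound $\Vert X(s)-X^h(s)\Vert_{L^{4p}}\leq C$ is available, but it gives no rate, so plugging it in yields only $\Vert X(t)-X^h(t)\Vert_{L^{2p}}\leq Ch^{\beta}+Ct$, which is useless.

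The paper circumvents exactly this obstacle by inserting an auxiliary process
\[
\widetilde{X}^h(t)=S_h(t)P_hX_0+\int_0^tS_h(t-s)P_hF(X(s))\,ds+W_{A_h}(t),
\]
so that $X-\widetilde{X}^h$ contains only the semigroup--approximation terms and is bounded directly by $Ch^{\beta}$ in \emph{every} $L^{2p}(\Omega,H)$ without any Gronwall step. The remaining piece $\widetilde{e}^h:=\widetilde{X}^h-X^h$ solves a \emph{deterministic} (pathwise) equation, and the paper runs an energy estimate on it: taking the inner product with $\widetilde{e}^h$, the one-sided Lipschitz \assref{assumption2} controls $\langle F(\widetilde{X}^h)-F(X^h),\widetilde{e}^h\rangle\leq L_0\Vert\widetilde{e}^h\Vert^2$ \emph{pathwise}, so no moment escalation occurs in the Gronwall loop. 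The only place H\"older raises moments is on $\Vert F(X)-F(\widetilde{X}^h)\Vert_{L^{2p}}$, but $X-\widetilde{X}^h$ has already been bounded by $Ch^{\beta}$ for arbitrary $p$, so there is no circularity. This use of the energy method together with the one-sided Lipschitz condition is the missing ingredient in your argument; a pure mild-solution approach with the local-Lipschitz bound \eqref{polynome2} cannot close the loop on its own.
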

\begin{proof}
Let us introduce the following auxiliary process
\begin{eqnarray}
\label{aux1}
\widetilde{X}^h(t):=S_h(t)P_hX_0+\int_0^tS_h(t-s)P_hF(X(s))ds+W_{A_h}(t).
\end{eqnarray}
Using triangle inequality, it holds that
\begin{eqnarray}
\label{aux1a}
\Vert X(t)-X^h(t)\Vert_{L^{2p}(\Omega, H)}\leq \Vert X(t)-\widetilde{X}^h(t)\Vert_{L^{2p}(\Omega, H)}+\Vert \widetilde{X}^h(t)-X^h(t)\Vert_{L^{2p}(\Omega, H)}.
\end{eqnarray}
Using \thmref{Regularity}, \eqref{prior1} and \assref{assumption2}, one can easily check that
\begin{eqnarray}
\label{aux1b}
\Vert \widetilde{X}^h(t)\Vert_{L^{2p}(\Omega, H)}<C,\quad \sup_{t\in[0, T]}\mathbb{E}\left[\Vert\widetilde{X}^h(t)\Vert^{2p}_{\mathcal{C}(\overline{\Lambda}, \mathbb{R})}\right]\leq C,\quad \Vert F(\widetilde{X}^h(t))\Vert_{L^{2p}(\Omega, H)}\leq C,\quad t\in[0, T].
\end{eqnarray}
Let us start by estimating the first term in \eqref{aux1a}. From \eqref{aux1} and \eqref{mild1} and using triangle inequality, it holds that
\begin{eqnarray}
\label{aux1c}
\Vert X(t)-\widetilde{X}^h(t)\Vert_{L^{2p}(\Omega, H)}&\leq&\left\Vert\left(S(t)-S_h(t)P_h\right)X_0\right\Vert_{L^{2p}(\Omega, H)}\nonumber\\
&+&\left\Vert\int_0^t\left(S(t-s)-S_h(t-s)P_h\right)F(X(s))ds\right\Vert_{L^{2p}(\Omega, H)}\nonumber\\
&=:&III_1+III_2.
\end{eqnarray}
Using \lemref{RecallTambue} (i) with $r=\alpha=\beta$ and  \assref{assumption1} for the estimate of $III_1$ and \lemref{RecallTambue} (iii) with $\rho=0$ for the estimate of $III_2$  yields
\begin{eqnarray}
\label{aux1d}
III_1\leq Ch^{\beta}\Vert A^{\beta/2}X_0\Vert_{L^{2p}(\Omega, H)}\leq Ch^{\beta},\quad III_2\leq Ch^{2}.
\end{eqnarray}
Substituting \eqref{aux1d} in \eqref{aux1c} yields
\begin{eqnarray}
\label{errorintermediare}
\Vert X(t)-\widetilde{X}^h(t)\Vert_{L^{2p}(\Omega, H)}\leq Ch^{\beta},\quad t\in[0, T].
\end{eqnarray}
Let us introduce the following error representation $\widetilde{e}^h(t):=\widetilde{X}^h(t)-X^h(t)$. It is immediate to see that $\widetilde{e}^h(t)$ is  differentiable with repect to the time  and satisfies 
\begin{eqnarray}
\label{aux2}
\frac{d}{dt}\widetilde{e}^h(t)+A_h\widetilde{e}^h(t)=P_h\left(F(X(t))-F(X^h(t))\right),\quad t\in(0, T],\quad \widetilde{e}^h(0)=0.
\end{eqnarray}
Taking the inner product in both sides of \eqref{aux2}, using \assref{assumption2} and Cauchy-Schwartz's inequality yields
\begin{eqnarray}
\label{aux3}
\frac{1}{2}\frac{d}{ds}\Vert\widetilde{e}^h(s)\Vert^2+\left\langle A_h\widetilde{e}^h(s), \widetilde{e}^h(s)\right\rangle&=&\left\langle F(\widetilde{X}^h(s))-F(X^h(s)), \widetilde{e}^h(s)\right\rangle\nonumber\\
&+&\left\langle F(X(s))-F(\widetilde{X}^h(s)), \widetilde{e}^h(s)\right\rangle\nonumber\\
&\leq& \Vert\widetilde{e}^h(s)\Vert^2+\frac{1}{2}\Vert F(X(s))-F(\widetilde{X}^h(s))\Vert^2+\frac{1}{2}\Vert\widetilde{e}^h(s)\Vert^2.
\end{eqnarray}
Using the coercivity estimate \eqref{ellip2},  the fact that $\widetilde{e}^h(s)\in V_h$ and \eqref{aux3} yields
\begin{eqnarray}
\label{aux4}
\lambda_0\Vert\widetilde{e}^h(s)\Vert^2_1&\leq& a\left(\widetilde{e}^h(s), \widetilde{e}^h(s)\right)\nonumber\\
&=&-\frac{1}{2}\frac{d}{ds}\Vert \widetilde{e}^h(s)\Vert^2+\frac{1}{2}\frac{d}{ds}\Vert \widetilde{e}^h(s)\Vert^2+\left\langle A_h\widetilde{e}^h(s), \widetilde{e}^h(s)\right\rangle\nonumber\\
&\leq& -\frac{1}{2}\frac{d}{ds}\Vert \widetilde{e}^h(s)\Vert^2+\frac{3}{2}\Vert \widetilde{e}^h(s)\Vert^2+\frac{1}{2}\Vert F(X(s))-F(\widetilde{X}^h(s))\Vert^2.
\end{eqnarray}
Using again \eqref{aux3}, it holds that
\begin{eqnarray}
\label{aux5}
\frac{1}{2}\frac{d}{ds}\Vert \widetilde{e}^h(s)\Vert^2&\leq& \frac{3}{2}\Vert\widetilde{e}^h(s)\Vert^2+\frac{1}{2}\Vert F(X(s))-F(\widetilde{X}^h(s))\Vert^2-\left\langle A_h\widetilde{e}^h(s), \widetilde{e}^h(s)\right\rangle\nonumber\\
&\leq& \frac{3}{2}\Vert\widetilde{e}^h(s)\Vert^2+\frac{1}{2}\Vert F(X(s))-F(\widetilde{X}^h(s))\Vert^2+\left\vert\left\langle A_h\widetilde{e}^h(s), \widetilde{e}^h(s)\right\rangle\right\vert.
\end{eqnarray}
Since  $\widetilde{e}^h(s)\in V_h$, using Cauchy-Schwartz's inequality,   the equivalence of norms \cite[(2.12)]{Larsson2} and \cite[Lemma 3.1]{Antonio2}, it holds that
\begin{eqnarray}
\label{aux6}
\left\vert\left\langle A_h\widetilde{e}^h(s), \widetilde{e}^h(s)\right\rangle\right\vert&=&\left\vert\left\langle A_h^{1/2}\widetilde{e}^h(s), (A_h^*)^{1/2}\widetilde{e}^h(s)\right\rangle\right\vert\leq \frac{1}{2}\Vert A_h^{1/2}\widetilde{e}^h(s)\Vert^2+\frac{1}{2}\Vert (A_h^*)^{1/2}\widetilde{e}^h(s)\Vert^2\nonumber\\
&\leq& \frac{C}{2}\Vert \widetilde{e}^h(s)\Vert^2_1+\frac{1}{2}\Vert (A_h^*)^{1/2}A_h^{-1/2}\Vert^2_{L(H)}\Vert A_h^{1/2}\widetilde{e}^h(s)\Vert^2\nonumber\\
&\leq& \frac{C}{2}\Vert \widetilde{e}^h(s)\Vert^2_1+\frac{C}{2}\Vert A_h^{1/2}\widetilde{e}^h(s)\Vert^2\leq C\Vert \widetilde{e}^h(s)\Vert^2_1.
\end{eqnarray}
Substituting \eqref{aux6} in \eqref{aux5} and employing \eqref{aux4} yields
\begin{eqnarray}
\label{aux7}
\frac{1}{2}\frac{d}{ds}\Vert \widetilde{e}^h(s)\Vert^2&\leq& \frac{3}{2}\Vert\widetilde{e}^h(s)\Vert^2+\frac{1}{2}\Vert F(X(s))-F(\widetilde{X}^h(s))\Vert^2 +\frac{C}{\lambda_0}.\lambda_0\Vert \widetilde{e}^h(s)\Vert^2_1\nonumber\\
&\leq& C\Vert\widetilde{e}^h(s)\Vert^2+C\Vert F(X(s))-F(\widetilde{X}^h(s))\Vert^2 -\frac{C}{2\lambda_0}\frac{d}{ds}\Vert \widetilde{e}^h(s)\Vert^2.
\end{eqnarray}
Therefore, from \eqref{aux7} we have 
\begin{eqnarray}
\label{aux8}
\left(\frac{1}{2}+\frac{C}{2\lambda_0}\right)\frac{d}{ds}\Vert \widetilde{e}^h(s)\Vert^2\leq  C\Vert\widetilde{e}^h(s)\Vert^2+C\Vert F(X(s))-F(\widetilde{X}^h(s))\Vert^2.
\end{eqnarray}
Integrating both sides of \eqref{aux8} over $[0, t]$ yields
\begin{eqnarray}
\label{aux9}
\Vert\widetilde{e}^h(t)\Vert^2\leq C\int_0^t\Vert\widetilde{e}^h(s)\Vert^2ds+C\int_0^t\Vert F(X(s))-F(\widetilde{X}^h(s))\Vert^2ds.
\end{eqnarray}
Using the Cauchy-Schwartz's inequality, \assref{assumption3}, \propref{propexistence}, \eqref{aux1b} and \eqref{errorintermediare} yields
\begin{eqnarray}
\label{aux10}
\Vert\widetilde{e}^h(t)\Vert^2_{L^{2p}(\Omega, H)}&\leq& C\int_0^t\Vert\widetilde{e}^h(s)\Vert^2_{L^{2p}(\Omega, H)}ds+C\int_0^t\Vert F(X(s))-F(\widetilde{X}^h(s))\Vert^2_{L^{2p}(\Omega, H)}ds\nonumber\\
&\leq& C\int_0^t\Vert\widetilde{e}^h(s)\Vert^2_{L^{2p}(\Omega, H)}ds\nonumber\\
&+&C\int_0^t\Vert X(s)-\widetilde{X}^h(s)\Vert^2_{L^{4p}(\Omega, H)}\nonumber\\
&\times&\left(1+\mathbb{E}\left[\sup_{s\in[0, T]}\Vert X(s)\Vert^{4pc_1}_{\mathcal{C}(\overline{\Lambda}, \mathbb{R})}\right]+\mathbb{E}\left[\sup_{s\in[0,T]}\Vert \widetilde{X}^h(s)\Vert^{4pc_1}_{\mathcal{C}(\overline{\Lambda},\mathbb{R})}\right]\right)^{1/2p}ds\nonumber\\
&\leq& C\int_0^t\Vert\widetilde{e}^h(s)\Vert^2_{L^{2p}(\Omega, H)}ds+Ch^{2\beta}.
\end{eqnarray}
Applying the Gronwall's lemma to \eqref{aux10} yields
\begin{eqnarray}
\label{aux11}
\Vert \widetilde{e}^h(t)\Vert_{L^{2p}(\Omega, H)}\leq Ch^{\beta},\quad t\in[0, T].
\end{eqnarray}
This completes the proof of the theorem. 
\end{proof}

Let us recall the following useful result from \cite[Lemma 3.3]{Antjd2}.
\begin{lemma} 
\label{RecallMukam}
\begin{itemize}
\item[(i)] For any $m$, $h$ and $\Delta t$ the following estimate holds
\begin{eqnarray*}
\Vert (\mathbf{I}+\Delta tA_h)^{-m}\Vert_{L(H)}\leq 1.
\end{eqnarray*}
\item[(ii)] Let  $0\leq q\leq 1$. Then for all $u \in H$, the following estimate holds
\begin{eqnarray*}
\Vert (S_h(t_m)-S_{h,\Delta t}^m)P_hu\Vert\leq C\Delta t^{q}t_m^{-q}\Vert u\Vert,\quad m=1,\cdots, M.
\end{eqnarray*}
\item[(iii)] For all $u\in\mathcal{D}(A^{\gamma-1})$, $0\leq\gamma\leq 2$, the following estimate holds
\begin{eqnarray*}
\Vert (S_{h,\Delta t}^m-S_h(t_m))P_hu\Vert\leq Ct_m^{-1/2}\Delta t^{\gamma/2}\Vert u\Vert_{\gamma-1}.
\end{eqnarray*}
\item[(iv)]
If $u\in\mathcal{D}(A^{\mu/2})$, $0\leq\mu\leq2$. Then the following error estimate holds 
\begin{eqnarray*}
\Vert (S_h(t_m)-S_{h,\Delta t}^m)P_hu\Vert\leq C\Delta t^{\mu/2}\Vert u\Vert_{\mu},\quad m=0,\cdots, M.
\end{eqnarray*}
\item[(v)] For  $u\in\mathcal{D}(A^{-\sigma})$, $0\leq\sigma\leq1$, the following estimate holds
\begin{eqnarray*}
\Vert (S_{h,\Delta t}^m-S_h(t_m))P_hu\Vert\leq Ct_m^{-1}\Delta t^{(2-\sigma)/2}\Vert u\Vert_{-\sigma}.
\end{eqnarray*}
\end{itemize}
\end{lemma}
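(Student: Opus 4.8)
The plan is to establish (i) by a direct monotonicity argument and to reduce the four error estimates (ii)--(v) to a single spectral representation of $S_h(t_m)-S_{h,\Delta t}^m$ combined with the classical consistency analysis of the backward Euler rational function $r(z)=(1+z)^{-1}$ approximating $e^{-z}$.

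For (i) I would exploit that the coercivity property \eqref{ellip2} and the definition \eqref{discreteop} of $A_h$ make $A_h$ accretive on $V_h$: for every $v^h\in V_h$ one has $\langle A_hv^h,v^h\rangle=a(v^h,v^h)\geq\lambda_0\Vert v^h\Vert_1^2\geq0$. Writing $u=(\mathbf{I}+\Delta tA_h)^{-1}w$, so that $w=u+\Delta tA_hu$, and pairing with $u$ gives $\langle w,u\rangle=\Vert u\Vert^2+\Delta t\langle A_hu,u\rangle\geq\Vert u\Vert^2$; Cauchy--Schwarz then yields $\Vert u\Vert\leq\Vert w\Vert$, i.e. $\Vert(\mathbf{I}+\Delta tA_h)^{-1}\Vert_{L(H)}\leq1$, and taking the $m$-th power proves (i). This is the one place where the lack of self-adjointness is harmless, since only the nonnegative real part $\langle A_hu,u\rangle$ enters.

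For (ii)--(v) I would use that $A_h$ is sectorial uniformly in $h$, so that functions of $A_h$ admit the Dunford--Riesz representation over a sectorial contour $\Gamma$ surrounding the spectrum of $A_h$. Since the only singularity of $\lambda\mapsto(1+\Delta t\lambda)^{-m}$ is the pole $\lambda=-1/\Delta t$, which lies outside $\Gamma$, both $S_h(t_m)$ and $S_{h,\Delta t}^m$ are captured by the same contour, and for a real exponent $\theta$
\[ (S_h(t_m)-S_{h,\Delta t}^m)A_h^{-\theta}=\frac{1}{2\pi i}\int_\Gamma\big(e^{-t_m\lambda}-(1+\Delta t\lambda)^{-m}\big)\lambda^{-\theta}(\lambda I-A_h)^{-1}\,d\lambda, \]
with the principal branch of $\lambda^{-\theta}$. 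The heart of the argument is then a sharp bound on the scalar kernel $e^{-t_m\lambda}-(1+\Delta t\lambda)^{-m}$ along $\Gamma$, which after the scaling $z=\Delta t\lambda$ (so $t_m\lambda=mz$) follows from the first-order consistency $e^{-z}-r(z)=O(z^2)$ near the origin, propagated over the $m$ steps by the telescoping identity
\[ e^{-mz}-r(z)^m=\sum_{j=0}^{m-1}r(z)^j\big(e^{-z}-r(z)\big)e^{-(m-1-j)z}, \]
together with $|r(z)|\leq1$ and the sectorial decay of $e^{-z}$. Inserting this kernel bound and the uniform resolvent estimate $\Vert(\lambda I-A_h)^{-1}\Vert_{L(H)}\leq C/\vert\lambda\vert$ on $\Gamma$ and evaluating the contour integral yields a master estimate of the form $\Vert(S_h(t_m)-S_{h,\Delta t}^m)A_h^{-\theta}\Vert_{L(H)}\leq C\Delta t^{\kappa}t_m^{-\nu}$ for the exponent pairs occurring in (ii)--(v). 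Each statement is then obtained by factoring the appropriate power of $A_h$ out of $u$ and replacing $\Vert\cdot\Vert_r=\Vert A^{r/2}\cdot\Vert$ by $\Vert A_h^{r/2}\cdot\Vert$ on $V_h$ via \lemref{embeddinglemma}(i) and the $A_h^{r/2}P_h$ bounds of \cite{Antjd1}: (iv) and (ii) correspond to $\theta=\mu/2$ (smooth data, no time weight) and $\theta=0$ (rough data, weight $t_m^{-q}$), while (iii) and (v) are the negative-norm variants with $\theta=(\gamma-1)/2$ and $\theta=-\sigma/2$ and the stated weights $t_m^{-1/2}$ and $t_m^{-1}$.

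I expect the main obstacle to be precisely the non-self-adjointness of $A_h$. In the self-adjoint case all of (ii)--(v) collapse to elementary scalar inequalities on the real spectrum via the spectral theorem; here one must instead carry the sharp kernel estimate through the sectorial functional calculus and, crucially, verify that the resolvent bound $\Vert(\lambda I-A_h)^{-1}\Vert_{L(H)}\leq C/\vert\lambda\vert$ and the analytic smoothing bounds $\Vert A_h^{\alpha}S_h(t)\Vert_{L(H)}\leq Ct^{-\alpha}$ and $\Vert A_h^{\alpha}S_{h,\Delta t}^m\Vert_{L(H)}\leq Ct_m^{-\alpha}$ hold with constants independent of $h$. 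These uniform-in-$h$ bounds, in tandem with the control $\Vert(A_h^*)^{1/2}A_h^{-1/2}\Vert_{L(H)}\leq C$ from \cite[Lemma 3.1]{Antonio2} already used in \lemref{embeddinglemma} and \thmref{Regularity}, are what substitute for the spectral calculus and make the kernel estimates go through in the general, not necessarily self-adjoint, setting.
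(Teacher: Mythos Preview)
The paper does not prove this lemma at all: it merely states ``Let us recall the following useful result from \cite[Lemma 3.3]{Antjd2}'' and proceeds to use it as a black box. So there is no in-paper argument to compare your proposal against.

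That said, your sketch is a sound and standard route to establishing such estimates in the non-self-adjoint setting, and it is very likely close in spirit to the proof in the cited reference. Your treatment of (i) via accretivity of $A_h$ (from the coercivity \eqref{ellip2}) is exactly right and needs no spectral calculus. For (ii)--(v), the Dunford--Riesz representation of $(S_h(t_m)-S_{h,\Delta t}^m)A_h^{-\theta}$ together with the telescoping bound on $e^{-mz}-r(z)^m$ and the uniform-in-$h$ resolvent estimate is the canonical substitute for the eigenfunction expansion one would use in the self-adjoint case; this is precisely the machinery developed in the papers by the same authors (e.g.\ \cite{Antonio2,Antjd1,Antjd2}) that the present paper builds on. Your identification of the correct values of $\theta$ for each item and of the role of the uniform sectoriality and the bound $\Vert(A_h^*)^{1/2}A_h^{-1/2}\Vert_{L(H)}\leq C$ is accurate. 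One small point worth making explicit in a full write-up is the passage from $\Vert A^{r/2}u\Vert$ to $\Vert A_h^{r/2}P_hu\Vert$ (and vice versa) for negative $r$, which requires slightly more than \lemref{embeddinglemma}(i) and is handled in \cite[Lemma 11]{Antjd1}; you allude to this but do not spell it out.
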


Let us introduce the following process
\begin{eqnarray}
\label{reaux1a}
W_{A_h}^m:=\sum_{j=0}^{m-1}S^{m-j}_{h,\Delta t}P_h\Delta W_m, \quad m\in\{0, \cdots,M\},\quad M\in\mathbb{N}.
\end{eqnarray}

\begin{lemma}
\label{prioritylemma}
 Let Assumptions \ref{assumption2}, \ref{assumption3} and \ref{assumption4} be fulfilled. Then the following estimates hold
\begin{eqnarray}
\label{reaux1aa}
\Vert A_h^{\beta/2} W_{A_h}^m\Vert_{L^{2p}(\Omega, H)}\leq C \quad \text{and}\quad \Vert F(W_{A_h}^m)\Vert_{L^{2p}(\Omega, H)}\leq C, \; m=0, 1, \cdots M, \quad M\in \mathbb{N}.
\end{eqnarray}
\end{lemma}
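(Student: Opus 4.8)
The plan is to mirror the semi-discrete estimate established in \lemref{embeddinglemma}(ii), replacing the analytic semigroup $S_h(t)$ by the fully discrete backward Euler operator $S_{h,\Delta t}^{n}=(\mathbf{I}+\Delta tA_h)^{-n}$. First I would rewrite the discrete convolution \eqref{reaux1a} as a single It\^o integral: since $\Delta W_j=W(t_{j+1})-W(t_j)=\int_{t_j}^{t_{j+1}}dW(s)$, one has $W_{A_h}^m=\int_0^{t_m}\Phi(s)\,dW(s)$ with the deterministic, piecewise-constant integrand $\Phi(s):=S_{h,\Delta t}^{m-j}P_h$ for $s\in[t_j,t_{j+1})$. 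Applying the Burkh\"older--Davis--Gundy inequality \eqref{Davis1} and using that $\Phi$ is deterministic then gives
\[
\left\Vert A_h^{\beta/2}W_{A_h}^m\right\Vert_{L^{2p}(\Omega,H)}\leq C(p)\left(\Delta t\sum_{n=1}^{m}\left\Vert A_h^{\beta/2}S_{h,\Delta t}^{n}P_hQ^{\frac12}\right\Vert^2_{\mathcal{L}_2(H)}\right)^{1/2}.
\]

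Next, exactly as in the derivation of \eqref{bruitest3}, I would factor $A_h^{\beta/2}S_{h,\Delta t}^{n}=\big(A_h^{1/2}S_{h,\Delta t}^{n}\big)A_h^{(\beta-1)/2}$ (the factors commute as functions of $A_h$) and use \eqref{trace1} together with \cite[Lemma 11]{Antjd1} and \assref{assumption4} to extract $\big\Vert A_h^{(\beta-1)/2}P_hQ^{1/2}\big\Vert_{\mathcal{L}_2(H)}\leq C\big\Vert A^{(\beta-1)/2}Q^{1/2}\big\Vert_{\mathcal{L}_2(H)}<\infty$. This reduces the first bound to the discrete smoothing estimate $\Delta t\sum_{n=1}^{m}\big\Vert A_h^{1/2}S_{h,\Delta t}^{n}\big\Vert^2_{L(H)}\leq C$ uniformly in $h$, $m$ and $\Delta t$, which is the backward Euler analogue of \eqref{raphael1} at the borderline exponent $\rho=1$.

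I expect this last step to be the main obstacle. At $\rho=1$ a crude factorisation through the operator norm is too lossy: the continuous integral $\int_0^{t}\Vert A^{1/2}S(s)\Vert^2_{L(H)}\,ds$ is only logarithmically controlled, and the corresponding Riemann sum $\Delta t\sum_n (n\Delta t)^{-1}$ threatens a $\log(T/\Delta t)$ loss. The clean way around this is to keep the Hilbert--Schmidt structure intact and estimate vector-wise: writing $v_k:=A_h^{(\beta-1)/2}P_hQ^{1/2}\psi_k$ for an orthonormal basis $(\psi_k)$, I would control $\Delta t\sum_{n=1}^{m}\Vert A_h^{1/2}S_{h,\Delta t}^{n}v_k\Vert^2$ by the discrete energy identity attached to the coercivity \eqref{ellip2} (the summation-by-parts analogue of $\int_0^\infty\Vert A^{1/2}S(s)v\Vert^2\,ds\leq C\Vert v\Vert^2$, obtained here from $w_{n-1}=w_n+\Delta tA_hw_n$ with $w_n=S_{h,\Delta t}^{n}v_k$), and then sum over $k$ to recover $\Vert A_h^{(\beta-1)/2}P_hQ^{1/2}\Vert^2_{\mathcal{L}_2(H)}$. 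This both legitimises the use of \lemref{Sharpestimates} in the fully discrete setting and removes the logarithmic factor, yielding the first inequality in \eqref{reaux1aa}; since \eqref{Davis1} holds for every $p$, the same bound with $2p$ replaced by any higher moment follows identically.

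Finally, the second estimate is deduced from the first exactly as in \lemref{embeddinglemma}(ii). Using the norm equivalence \eqref{embedd1} and the Sobolev embedding \eqref{embedd0a} one gets $\Vert W_{A_h}^m\Vert_{L^{2p}(\Omega,\mathcal{C})}\leq C\Vert A^{\beta/2}W_{A_h}^m\Vert_{L^{2p}(\Omega,H)}\leq C\Vert A_h^{\beta/2}W_{A_h}^m\Vert_{L^{2p}(\Omega,H)}\leq C$, while boundedness of $A_h^{-\beta/2}$ gives $\Vert W_{A_h}^m\Vert_{L^{2p}(\Omega,H)}\leq C$ for every $p$. The polynomial growth bound \eqref{polynome1} from \lemref{remarkassumption3} combined with the Cauchy--Schwarz inequality in $\Omega$ then yields $\Vert F(W_{A_h}^m)\Vert_{L^{2p}(\Omega,H)}\leq C+C\Vert W_{A_h}^m\Vert_{L^{4p}(\Omega,H)}\big(1+\mathbb{E}\Vert W_{A_h}^m\Vert^{4pc_1}_{\mathcal{C}}\big)^{1/2}\leq C$, which is the claimed estimate.
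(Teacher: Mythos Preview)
Your argument is correct but proceeds along a genuinely different line from the paper. After the common opening (rewrite $W_{A_h}^m$ as an It\^o integral and apply \eqref{Davis1}), the paper does \emph{not} attempt a direct discrete smoothing estimate: instead it adds and subtracts the continuous semigroup, splitting $A_h^{\beta/2}S_{h,\Delta t}^{m-j}P_hQ^{1/2}$ into $(S_{h,\Delta t}^{m-j}-S_h(s))A_h^{\beta/2}P_hQ^{1/2}$ and $S_h(s)A_h^{\beta/2}P_hQ^{1/2}$. The first piece is handled by the discrete--continuous error bound \cite[Lemma~3.5(ii)]{Antjd2} (producing an $O(\Delta t^{1/2-\epsilon})$ contribution), and the second by the continuous estimate \eqref{raphael1}. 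Your route avoids this detour entirely: you keep the Hilbert--Schmidt structure and exploit the discrete energy identity $w_{n-1}=w_n+\Delta tA_hw_n$ with coercivity \eqref{ellip2} to obtain $\Delta t\sum_{n=1}^m\Vert A_h^{1/2}S_{h,\Delta t}^n v\Vert^2\leq C\Vert v\Vert^2$ vector-wise, which is the sharp discrete analogue of \eqref{raphael1} at $\rho=1$ without any $\epsilon$-loss and without importing the external lemma. This is more self-contained and slightly cleaner; the paper's approach, on the other hand, reuses machinery already in place for the time-error analysis. The deduction of the second estimate via \eqref{embedd1}, \eqref{embedd0a} and \eqref{polynome1} is identical in both treatments (up to an inessential exponent on the moment factor).
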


\begin{proof}
 Let us introduce $S_{h, \Delta t}(t):=S_{h, \Delta t}^m$ for any  $t\in[t_{m-1}, t_m)$ and let $\chi_B$ be the characteristic function of $B\subset \mathbb{R}$. Then $W_{A_h}^m$ can be written as follows  
\begin{eqnarray}
\label{reaux1aaa}
W_{A_h}^m=\int_0^T\chi_{[0, t_m)}(s)S_{h, \Delta t}(t_m-s)P_hdW(s).
\end{eqnarray}
Using the Burkh\"{o}lder-Davis-Gundy's inequality \eqref{Davis1}, \cite[Lemma 11]{Antjd1} and \cite[Lemma 3.5 (ii)]{Antjd2}, it follows that
\begin{eqnarray}
\label{reaux1ab}
\Vert A_h^{\frac{\beta}{2}}W_{A_h}^m\Vert_{L^{2p}(\Omega, H)}&\leq& C\left(\int_0^T\left\Vert\chi_{[0, t_m)}(s)A_h^{\frac{\beta}{2}} S_{h,\Delta t}(t_m-s)P_hQ^{\frac{1}{2}}\right\Vert^2_{L^{2p}\left(\Omega, \mathcal{L}_2(H)\right)}ds\right)^{1/2}\nonumber\\
&\leq& C\left(\sum_{j=0}^{m-1}\int_{t_j}^{t_{j+1}}\Vert A_h^{\frac{\beta}{2}}S_{h, \Delta t}^{m-j}P_hQ^{\frac{1}{2}}\Vert^2_{\mathcal{L}_2(H)}ds\right)^{1/2}\nonumber\\
&\leq& C\left(\sum_{j=0}^{m-1}\int_{t_j}^{t_{j+1}}\Vert \left(S_{h, \Delta t}^{m-j}-S_h(s)\right)A_h^{\frac{\beta}{2}} P_hQ^{\frac{1}{2}}\Vert^2_{\mathcal{L}_2(H)}ds\right)^{1/2}\nonumber\\
&+&C\left(\sum_{j=0}^{m-1}\int_{t_j}^{t_{j+1}}
\left\Vert S_h(s)A_h^{\frac{\beta}{2}} P_hQ^{\frac{1}{2}}\right\Vert^2_{\mathcal{L}_2(H)}ds\right)^{1/2}\nonumber\\
&\leq& C\Delta t^{\frac{1}{2}-\epsilon}\left\Vert A^{\frac{\beta-1}{2}}Q^{\frac{1}{2}}\right\Vert_{\mathcal{L}_2(H)}\nonumber\\
&+&C\left(\int_0^{t_m}\left\Vert S_h(s)A_h^{\frac{1}{2}}\right\Vert^2_{L(H)}\left\Vert A_h^{\frac{\beta-1}{2}}P_h Q^{\frac{1}{2}}\right\Vert_{\mathcal{L}_2(H)}^2ds\right)^{1/2}\nonumber\\
&\leq& C\Delta t^{\frac{1}{2}-\epsilon}+C\left(\int_0^{T}\left\Vert S_h(s)A_h^{\frac{1}{2}}\right\Vert^2_{L(H)}ds\right)^{1/2}\leq C.
\end{eqnarray}
Since $\beta> \frac{d}{2}$ and the estimate \eqref{reaux1ab} is independent of $m$, using the Sobolev embedding \eqref{embedd0a}, it holds that
\begin{eqnarray}
\label{reaux1ac}
\sup_{M\in\mathbb{N}}\sup_{m\in\{0,\cdots, M\}}\mathbb{E}\left[\Vert W_{A_h}^m\Vert^{2p}_{\mathcal{C}(\overline{\Lambda}, \mathbb{R})}\right]\leq C.
\end{eqnarray}
Employing \rmref{remarkassumption3}, Cauchy-Schwartz's inequality, \eqref{reaux1ac} and \eqref{reaux1ab} it holds that
\begin{eqnarray}
\label{reaux1ad}
\Vert F(W_{A_h}^m)\Vert_{L^{2p}(\Omega, H)}\leq C\Vert W_{A_h}^m\Vert_{L^{4p}(\Omega, H)}\left(1+\mathbb{E}\Vert W_{A_h}^m\Vert^{4pc_1}_{\mathcal{C}(\overline{\Lambda}, \mathbb{R})}\right)\leq C,\; m\in\{0, \cdots, M\},\; M\in\mathbb{N}.
\end{eqnarray}
This completes the proof of \lemref{prioritylemma}.
\end{proof}

\begin{lemma}
\label{Momentboundlemma}
Under Assumptions \ref{assumption1}, \ref{assumption2}, \ref{assumption3} and \ref{assumption4},  the numerical scheme $X^h_m$ given in \eqref{scheme1}  satisfies   the following estimate 
\begin{eqnarray*}
\sup_{M\in\mathbb{N}}\sup_{m\in\{1,2,\cdots,M\}}\left(\Vert \xi^h_m\Vert_{L^{2p}(\Omega, H}\right)<\infty.
\end{eqnarray*}
\end{lemma}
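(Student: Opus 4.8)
The plan is to transpose the energy argument used for the semi-discrete solution in \thmref{Regularity} to the fully discrete level, after first removing the discrete stochastic convolution. To this end I would introduce the auxiliary process $Y^h_m:=X^h_m-W^m_{A_h}$, where $W^m_{A_h}$ is the process defined in \eqref{reaux1a}. Its defining sum immediately gives the one-step recursion $W^{m+1}_{A_h}=S_{h,\Delta t}W^m_{A_h}+S_{h,\Delta t}P_h\Delta W_m$ with $W^0_{A_h}=0$, so subtracting it from the scheme \eqref{scheme1} cancels the noise increment and yields $Y^h_{m+1}=S_{h,\Delta t}Y^h_m+\Delta t\,S_{h,\Delta t}P_hF(X^h_{m+1})$. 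Multiplying by $(\mathbf{I}+\Delta tA_h)$ and writing $X^h_{m+1}=Y^h_{m+1}+W^{m+1}_{A_h}$ produces the purely pathwise recursion
\begin{eqnarray*}
Y^h_{m+1}-Y^h_m+\Delta t A_hY^h_{m+1}=\Delta t\,P_hF\left(Y^h_{m+1}+W^{m+1}_{A_h}\right),\quad Y^h_0=P_hX_0.
\end{eqnarray*}

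Next I would take the inner product with $Y^h_{m+1}$ and use the identity $\langle a-b,a\rangle=\tfrac12(\Vert a\Vert^2-\Vert b\Vert^2+\Vert a-b\Vert^2)$, together with the self-adjointness of $P_h$ and $Y^h_{m+1}\in V_h$ (which turns $\langle P_hF(\cdot),Y^h_{m+1}\rangle$ into $\langle F(\cdot),Y^h_{m+1}\rangle$), to obtain
\begin{eqnarray*}
\tfrac12\left(\Vert Y^h_{m+1}\Vert^2-\Vert Y^h_m\Vert^2\right)+\Delta t\,a\left(Y^h_{m+1},Y^h_{m+1}\right)\leq \Delta t\left\langle F\left(Y^h_{m+1}+W^{m+1}_{A_h}\right),Y^h_{m+1}\right\rangle,
\end{eqnarray*}
where the non-negative term $\tfrac12\Vert Y^h_{m+1}-Y^h_m\Vert^2$ has already been discarded. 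The coercivity \eqref{ellip2} gives $a(Y^h_{m+1},Y^h_{m+1})\geq\lambda_0\Vert Y^h_{m+1}\Vert^2_1\geq0$, so this term may also be dropped; this is exactly where the implicit backward Euler structure pays off, since it allows me to retain the damping at the current time level even though $A_h$ is not self-adjoint. I then split the right-hand side as $\langle F(Y^h_{m+1}+W^{m+1}_{A_h})-F(W^{m+1}_{A_h}),Y^h_{m+1}\rangle+\langle F(W^{m+1}_{A_h}),Y^h_{m+1}\rangle$: the first bracket is $\langle F(u)-F(v),u-v\rangle$ with $u=Y^h_{m+1}+W^{m+1}_{A_h}$, $v=W^{m+1}_{A_h}$, hence bounded by $L_0\Vert Y^h_{m+1}\Vert^2$ through the one-sided Lipschitz condition \assref{assumption2}, while the second is controlled by Young's inequality via $\tfrac12\Vert F(W^{m+1}_{A_h})\Vert^2+\tfrac12\Vert Y^h_{m+1}\Vert^2$. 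Collecting terms gives $(1-C\Delta t)\Vert Y^h_{m+1}\Vert^2\leq\Vert Y^h_m\Vert^2+\Delta t\Vert F(W^{m+1}_{A_h})\Vert^2$.

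For $\Delta t$ sufficiently small that $C\Delta t\leq\tfrac12$, I would invert the coefficient using $(1-C\Delta t)^{-1}\leq1+2C\Delta t$ and apply the discrete Gronwall lemma to reach the uniform pathwise bound
\begin{eqnarray*}
\Vert Y^h_m\Vert^2\leq e^{CT}\left(\Vert P_hX_0\Vert^2+\Delta t\sum_{j=1}^m\Vert F(W^j_{A_h})\Vert^2\right),\quad m=0,1,\dots,M.
\end{eqnarray*}
Raising this to the power $p$, taking expectations, and using the discrete Jensen/Hölder estimate $\big(\Delta t\sum_{j=1}^m\Vert F(W^j_{A_h})\Vert^2\big)^p\leq T^{p-1}\Delta t\sum_{j=1}^m\Vert F(W^j_{A_h})\Vert^{2p}$ (valid since $m\Delta t\leq T$) reduces everything to the uniform moment bound $\Vert F(W^j_{A_h})\Vert_{L^{2p}(\Omega,H)}\leq C$ of \lemref{prioritylemma} and to $\Vert P_hX_0\Vert_{L^{2p}(\Omega,H)}\leq\Vert X_0\Vert_{L^{2p}(\Omega,H)}\leq C$ from \assref{assumption1}. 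Hence $\sup_M\sup_m\Vert Y^h_m\Vert_{L^{2p}(\Omega,H)}\leq C$, and the triangle inequality together with $\Vert W^m_{A_h}\Vert_{L^{2p}(\Omega,H)}\leq\Vert A_h^{-\beta/2}\Vert_{L(H)}\Vert A_h^{\beta/2}W^m_{A_h}\Vert_{L^{2p}(\Omega,H)}\leq C$ (again \lemref{prioritylemma}) closes the estimate for $X^h_m=Y^h_m+W^m_{A_h}$.

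The main obstacle is the non-globally Lipschitz drift: the whole argument hinges on testing the implicit scheme at the new time level $m+1$, so that \assref{assumption2} applies directly to $\langle F(u)-F(v),u-v\rangle$ with the correct sign, and on verifying that every constant stays independent of both $\Delta t$ and $h$. The $h$-independence rests on the mesh-uniform bounds of \lemref{prioritylemma} and on the fact that the coercivity constant $\lambda_0$ in \eqref{ellip2} does not depend on the triangulation; the small-$\Delta t$ requirement is only used to absorb the implicit $\Vert Y^h_{m+1}\Vert^2$ term, the remaining (finitely many) large-$\Delta t$ cases being trivially bounded step by step.
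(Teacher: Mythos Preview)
Your proposal is correct and follows essentially the same route as the paper: introduce $Y^h_m:=X^h_m-W^m_{A_h}$, derive the pathwise recursion, test against $Y^h_{m+1}$, split the drift via the one-sided Lipschitz condition (\assref{assumption2}), invoke \lemref{prioritylemma} for $\Vert F(W^j_{A_h})\Vert_{L^{2p}(\Omega,H)}$, and close with the discrete Gronwall lemma.

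The one noteworthy difference is your treatment of the term $\Delta t\,\langle A_hY^h_{m+1},Y^h_{m+1}\rangle$. You observe that, by the very definition \eqref{discreteop} of $A_h$, this equals $\Delta t\,a(Y^h_{m+1},Y^h_{m+1})\geq\lambda_0\Delta t\Vert Y^h_{m+1}\Vert_1^2\geq0$ and simply discard it. The paper instead goes through the more involved chain \eqref{reaux6}--\eqref{reaux8}, bounding $\vert\langle A_hY^h_m,Y^h_m\rangle\vert$ via $A_h^{1/2}$, $(A_h^*)^{1/2}$ and the equivalence-of-norms machinery, only to arrive at the same telescoping inequality. Your shortcut is cleaner and perfectly valid here, since coercivity of the bilinear form $a(\cdot,\cdot)$ does not require self-adjointness of $A_h$. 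Both arguments ultimately need a smallness condition on $\Delta t$ to absorb the implicit $\Vert Y^h_{m+1}\Vert^2$ term before Gronwall (the paper relies on \cite[Lemma~7.1]{Larsson1}, which carries the same restriction), so your remark about handling the finitely many large-$\Delta t$ cases separately is in line with what the paper implicitly assumes.
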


\begin{proof}
We only give the proof for the backward Euler method, since the proof in the case of the semi-implicit method is similar.
We start by introducing the following process
\begin{eqnarray}
\label{reaux1}
Y^h_m:=X^h_m-W_{A_h}^m=S^m_{h,\Delta t}Y^h_0+\Delta t\sum_{j=0}^{m-1}S^{m-j}_{h,\Delta t}P_hF\left(Y^h_{j+1}+W_{A_h}^{j+1}\right),\; m\in\{1,2,\cdots, M\}.
\end{eqnarray}
One can easily check that $Y^h_m$ satisfies the following
\begin{eqnarray}
\label{reaux2}
\frac{Y^h_m-Y^h_{m-1}}{\Delta t}+A_hY^h_m=P_hF\left(Y^h_m+W^h_{A_h}\right),\quad Y^h_0=P_hX_0.
\end{eqnarray}
Taking the inner product in both sides of \eqref{reaux2},  using \assref{assumption2}  and Cauchy-Schwartz's inequality yields
\begin{eqnarray}
\label{reaux3}
\left\langle Y^h_m-Y^h_{m-1}, Y^h_m\right\rangle+\Delta t\langle A_hY^h_m, Y^h_m\rangle&=&\Delta t\left\langle F\left(Y^h_m+W^m_{A_h}\right), Y^h_m\right\rangle\nonumber\\
&=&\Delta t\left\langle F\left(Y^h_m+W^m_{A_h}\right)-F(W_{A_h}^m), Y^h_m\right\rangle+\Delta t\left\langle F(W_{A_h}^m), Y^h_m\right\rangle\nonumber\\
&\leq& L_1\Delta t\Vert Y^h_m\Vert^2+\frac{\Delta t}{2}\Vert F(W^m_{A_h})\Vert^2+\frac{\Delta t}{2}\Vert Y^h_m\Vert^2.
\end{eqnarray}
Using the fact that 
\begin{eqnarray}
\label{reaux4}
\frac{1}{2}\left(\Vert Y^h_m\Vert^2-\Vert Y^h_{m-1}\Vert^2\right)\leq \left\langle Y^h_m-Y^h_{m-1}, Y^h_m\right\rangle
\end{eqnarray}
it follows from \eqref{reaux3} that
\begin{eqnarray}
\label{reaux5}
\frac{1}{2}\left(\Vert Y^h_m\Vert^2-\Vert Y^h_{m-1}\Vert^2\right) +\Delta t\left\langle A_hY^h_m, Y^h_m\right\rangle\leq L_1\Delta t\Vert Y^h_m\Vert^2+\frac{\Delta t}{2}\Vert F(W^m_{A_h})\Vert^2+\frac{\Delta t}{2}\Vert Y^h_m\Vert^2.
\end{eqnarray}
From the coercivity estimate \eqref{ellip2}, \eqref{reaux5} and Cauchy-Schwartz's inequality, it holds that
\begin{eqnarray}
\label{reaux6}
\lambda_0\Delta t\Vert Y^h_m\Vert^2_1&\leq& \Delta ta\left(Y^h_m, Y^h_m\right)\nonumber\\
&\leq& \Delta t\langle A_hY^h_m, Y^h_m\rangle+\frac{1}{2}\left(\Vert Y^h_m\Vert^2-\Vert Y^h_{m-1}\Vert^2\right)-\frac{1}{2}\left(\Vert Y^h_m\Vert^2-\Vert Y^h_{m-1}\Vert^2\right)\nonumber\\
&\leq&\left(L_1+\frac{1}{2}\right)\Delta t\Vert Y^h_m\Vert^2+\frac{\Delta t}{2}\Vert F(W_{A_h})\Vert^2-\frac{1}{2}\left(\Vert Y^h_m\Vert^2-\Vert Y^h_{m-1}\Vert^2\right).
\end{eqnarray}
The equivalence of norms \cite[(2.12)]{Larsson2} and \cite[Lemma 3.1]{Antonio2} implies the existence of two positive constants $E_0$ and $E_1$ such that
\begin{eqnarray}
\label{reaux6a}
\Vert (A_h)^{1/2}A_h^{-1/2}\Vert_{L(H)}\leq E_0 \quad \text{and}\quad \Vert A_h^{1/2}u\Vert\leq E_1\Vert u\Vert_1,\quad u\in V_h.
\end{eqnarray}
Since  $Y^h_m\in V_h$, using Cauchy-Schwartz's inequality,  \eqref{reaux6a}  and \eqref{reaux6}, it holds that
\begin{eqnarray}
\label{reaux7}
\left\vert\left\langle A_hY^h_m, Y^h_m\right\rangle\right\vert&=&\left\vert\left\langle A_h^{1/2}Y^h_m, (A_h^*)^{1/2}Y^h_m\right\rangle\right\vert\leq \frac{1}{2}\Vert A_h^{1/2}Y^h_m\Vert^2+\frac{1}{2}\Vert (A_h^*)^{1/2}Y^h_m\Vert^2\nonumber\\
&\leq& \frac{E_1^2}{2}\Vert Y^h_m\Vert^2_1+\frac{1}{2}\Vert (A_h^*)^{1/2}A_h^{-1/2}\Vert^2_{L(H)}\Vert A_h^{1/2}Y^h_m\Vert^2\nonumber\\
&\leq& \frac{E_1^2}{2}\Vert Y^h_m\Vert^2_1+\frac{E_0^2}{2}\Vert A_h^{1/2}Y^h_m\Vert^2\leq (E_1^2+E_1^2E_0^2)\Vert Y^h_m\Vert^2_1\\
&\leq& \frac{(E_1^2+E_1^2E_0^2)}{\lambda_0\Delta t}\left[\left(L_1+\frac{1}{2}\right)\Delta t\Vert Y^h_m\Vert^2+\frac{\Delta t}{2}\Vert F(W_{A_h}^m)\Vert^2-\frac{1}{2}\left(\Vert Y^h_m\Vert^2-\Vert Y^h_{m-1}\Vert^2\right)\right].\nonumber
\end{eqnarray}
Substituting \eqref{reaux7} in \eqref{reaux5}  yields
\begin{eqnarray}
\label{reaux8}
\frac{1}{2}\left(\Vert Y^h_m\Vert^2-\Vert Y^h_{m-1}\Vert^2\right)
&\leq& L_1\Delta t\Vert Y^h_m\Vert^2+\frac{\Delta t}{2}\Vert F(W_{A_h}\Vert^2+\frac{\Delta t}{2}\Vert Y^h_m\Vert^2-\Delta t\left\langle A_hY^h_m, Y^h_m\right\rangle\nonumber\\
&\leq& \left(L_1+\frac{1}{2}\right)\Delta t\Vert Y^h_m\Vert^2+\frac{\Delta t}{2}\Vert F(W_{A_h}\Vert^2+\left\vert\Delta t\left\langle A_hY^h_m, Y^h_m\right\rangle\right\vert\nonumber\\
&\leq& \left(\frac{E_1^2+E_1^2E_0^2}{\lambda_0}+1\right)\left[\left(L_1+\frac{1}{2}\right)\Delta t\Vert Y^h_m\Vert^2+\frac{\Delta t}{2}\Vert F(W_{A_h}^m)\Vert^2\right]\nonumber\\
&-&\left(\frac{E_1^2+E_1^2E_0^2}{2\lambda_0}\right)\left(\Vert Y^h_m\Vert^2-\Vert Y^h_{m-1}\Vert^2\right).
\end{eqnarray}
It follows therefore  from \eqref{reaux8} and using \eqref{reaux1aa} that
\begin{eqnarray}
\label{reaux9}
&&\frac{1}{2}\left(1+\frac{E_1^2+E_1^2E_0^2}{\lambda_0}\right)\left(\Vert Y^h_m\Vert^2-\Vert Y^h_{m-1}\Vert^2\right)\nonumber\\
&\leq& \left(\frac{E_1^2+E_1^2E_0^2}{\lambda_0}+1\right)\left[\left(L_1+\frac{1}{2}\right)\Delta t\Vert Y^h_m\Vert^2+C\right].
\end{eqnarray}
Summing up \eqref{reaux9} over $m$ yields
\begin{eqnarray}
\label{reaux10}
\Vert Y^h_m\Vert^2&\leq &\Vert Y^h_0\Vert^2+2\Delta t\left(L_1+\frac{1}{2}\right)\left(\frac{\lambda_0}{E_1^2+E_1^2E_0^2+\lambda_0}\right)\sum_{j=1}^{m}\Vert Y^h_j\Vert^2+C.
\end{eqnarray}
 Applying the discrete Gronwall's lemma \cite[Lemma 7.1]{Larsson1}  to \eqref{reaux10}
yields
\begin{eqnarray}
\label{reaux12}
\Vert Y^h_m\Vert^2\leq C\left(\Vert X^h_0\Vert^2+1\right).
\end{eqnarray}
Taking the $L^{2p}(\Omega, H)$ norm in \eqref{reaux12} and using \assref{assumption1} yields
\begin{eqnarray*}
\Vert Y^h_m\Vert_{L^{2p}(\Omega, H)}\leq C+\Vert X_0\Vert_{L^{2p}(\Omega, H)}\leq C.
\end{eqnarray*}
The  proof of the lemma is completed  by using \eqref{reaux1aa}.
\end{proof}

\subsection{Main proof}
 Using triangle inequality we recast the fully discrete error as follows
\begin{eqnarray}
\label{main3}
\Vert X(t_m)-X^h_m\Vert_{L^{2p}(\Omega, H)}\leq \Vert X(t_m)-X^h(t_m)\Vert_{L^{2p}(\Omega, H)}+\Vert X^h(t_m)-X^h_m\Vert_{L^{2p}(\Omega, H)}.
\end{eqnarray}
 Note that the space error $\Vert X(t_m)-X^h(t_m)\Vert_{L^{2p}(\Omega, H)}$ is estimated in \thmref{Spaceerror}. It remains to estimate the time error $\Vert X^h(t_m)-X^h_m\Vert_{L^{2p}(\Omega, H)}$. 
 Let us  introduce the  auxiliary process $\widetilde{X}^h_m$, which satisfies
\begin{eqnarray}
\label{main1}
\widetilde{X}^h_m-\widetilde{X}^h_{m-1}+\Delta tA_h\widetilde{X}^h_m=\Delta tP_hF\left(X^h(t_m)\right)+P_h\Delta W_m,\quad \widetilde{X}^h_0=P_hX_0.
\end{eqnarray}
Note that the process $\widetilde{X}^h_m$ can be written as follows
\begin{eqnarray}
\label{main2}
\widetilde{X}^h_m=S^m_{h, \Delta t}P_hX_0+\Delta t\sum_{j=0}^{m-1}S^{m-j}_{h, \Delta t}P_hF\left(X^h(t_{j+1})\right)+W^m_{A_h}.
\end{eqnarray}
 Again using triangle inequality, we recast the time error as follows
\begin{eqnarray}
\label{main4}
\Vert X^h(t_m)-X^h_m\Vert_{L^{2p}(\Omega, H)}\leq\Vert X^h(t_m)-\widetilde{X}^h_m\Vert_{L^{2p}(\Omega, H)}+\Vert \widetilde{X}^h_m-X^h_m\Vert_{L^{2p}(\Omega, H)}.
\end{eqnarray}
Iterating the numerical scheme \eqref{scheme1} yields
\begin{eqnarray}
\label{aux12}
X^h_m=S^m_{h, \Delta t}P_hX_0+\Delta t\sum_{j=0}^{m-1}S^{m-j}_{h,\Delta t}P_hF(X^h_{j+1})+W_{A_h}^m.
\end{eqnarray}
We start with the estimate of the first term in \eqref{main4}.  Using triangle inequality, it holds that
{\small
\begin{eqnarray}
\label{main5}
\Vert X^h(t_m)-\widetilde{X}^h_m\Vert_{L^{2p}(\Omega, H)}
&\leq& \left\Vert\left(S_h(t_m)-S^m_{h,\Delta t}\right)P_hX_0\right\Vert_{L^{2p}(\Omega, H)}\nonumber\\
&+&\left\Vert\sum_{j=0}^{m-1}\int_{t_j}^{t_{j+1}}\left[S_h(t_m-s)P_hF(X^h(s))-S^{m-j}_{h, \Delta t}P_hF(X^h(t_{j+1}))\right]ds\right\Vert_{L^{2p}(\Omega, H)}\nonumber\\
&+&\left\Vert\sum_{j=0}^{m-1}\int_{t_j}^{t_{j+1}}\left(S_h(t_m-t_j)-S^{m-j}_{h, \Delta t}\right)P_hdW(s)\right\Vert_{L^{2p}(\Omega, H)}\nonumber\\
&=:& J_1+J_2+J_3.
\end{eqnarray}
}
Using \lemref{RecallMukam} (iv) and \assref{assumption1} yields
\begin{eqnarray}
\label{main5b}
J_1\leq C\Delta t^{\beta/2}.
\end{eqnarray}
In order to estimate $J_2$, we decompose it in three terms as follows:
\begin{eqnarray}
\label{main2a}
J_2&\leq&\left\Vert\sum_{j=0}^{m-1}\int_{t_j}^{t_{j+1}}S_h(t_m-s)P_h\left[F(X^h(s))-F(X^h(t_{j+1}))\right]ds\right\Vert_{L^{2p}(\Omega, H)}\nonumber\\
&+&\left\Vert\sum_{j=0}^{m-1}\int_{t_j}^{t_{j+1}}\left(S_h(t_m-s)-S_h(t_m-t_j)\right)P_hF(X^h(t_{j+1}))ds\right\Vert_{L^{2p}(\Omega, H)}\nonumber\\
&+&\left\Vert\sum_{j=0}^{m-1}\int_{t_j}^{t_{j+1}}\left(S_h(t_{m-j})-S^{m-j}_{h,\Delta t}\right)P_hF(X^h(t_{j+1}))ds\right\Vert_{L^{2p}(\Omega, H)}:=J_{21}+J_{22}+J_{23}.
\end{eqnarray}
We start by estimating $J_{22}$ and $J_{23}$ since they are easier than that of $J_{21}$. Using triangle inequality, the smoothing properties of the semigroup, \lemref{Sharpestimates} and \propref{propexistence} yields
\begin{eqnarray}
\label{main3a}
J_{22}&\leq& \sum_{j=0}^{m-1}\int_{t_j}^{t_{j+1}}\left\Vert\left(S_h(t_m-s)-S_h(t_m-t_j)\right)P_hF(X^h(t_{j+1}))\right\Vert_{L^{2p}(\Omega, H)}ds\nonumber\\
&\leq& \sum_{j=0}^{m-1}\int_{t_j}^{t_{j+1}}\left\Vert S_h(t_m-s)A_h^{\beta/2}\right\Vert_{L(H)}\left\Vert A_h^{-\beta/2}\left(\mathbf{I}-S_h(s-t_j)\right)\right\Vert_{L(H)}\Vert F(X^h(t_{j+1}))\Vert_{L^{2p}(\Omega, H)}ds\nonumber\\
&\leq& C\sum_{j=0}^{m-1}\int_{t_j}^{t_{j+1}}\left\Vert S_h(t_m-s)A_h^{\beta/2}\right\Vert_{L(H)}(s-t_j)^{\beta/2}ds\nonumber\\
&=&C\Delta t^{\beta/2}\sum_{j=0}^{m-1}\int_{t_j}^{t_{j+1}}\left\Vert S_h(t_m-s)A_h^{\beta/2}\right\Vert_{L(H)}ds\leq C\Delta t^{\beta/2}\int_0^{t_m}\left\Vert S_h(t_m-s)A_h^{\beta/2}\right\Vert_{L(H)}ds\nonumber\\
&\leq& C\Delta t^{\beta/2}.
\end{eqnarray}
Using triangle inequality, \lemref{RecallMukam} (v) and \propref{propexistence} yields
\begin{eqnarray}
\label{main4a}
J_{23}&\leq& \sum_{j=0}^{m-1}\int_{t_j}^{t_{j+1}}\left\Vert \left(S_h(t_{m-j})-S^{m-j}_{h,\Delta t}\right)P_hF(X^h(t_{j+1}))\right\Vert_{L^{2p}(\Omega, H)}ds\nonumber\\
&\leq& C\Delta t^{1-\epsilon}\sum_{j=0}^{m-1}\int_{t_j}^{t_{j+1}}t_{m-j}^{-1+\epsilon}\Vert P_hF(X^h(t_{j+1}))\Vert_{L^{2p}(\Omega, H)}ds\nonumber\\
&\leq& C\Delta t^{1-\epsilon}\sum_{j=0}^{m-1}\Delta tt_{m-j}^{-1+\epsilon}\leq C\Delta t^{1-\epsilon},
\end{eqnarray}
where at the last step we used the well-known estimate $\Delta t\sum\limits_{j=1}^mt_j^{-1+\alpha}<C$, for any $\alpha>0$.
Before proceeding to the estimate of $J_{21}$, we use Taylor's formula in Banach space to obtain 
\begin{eqnarray}
\label{main5a}
&&F(X^h(s))-F(X^h(t_{j+1}))\nonumber\\
&=&\left(\int_0^1F'\left(X^h(t_{j+1})+\lambda\left(X^h(s)-X^h(t_{j+1})\right)\right)d\lambda\right)\left(X^h(s)-X^h(t_{j+1})\right).
\end{eqnarray}
Note that the mild solution $X^h(s)$ can be written as follows
\begin{eqnarray}
\label{main6a}
X^h(t_{j+1})&=&S_h(t_{j+1}-s)X^h(s)+\int^{t_{j+1}}_sS_h(t_{j+1}-\sigma)P_hF(X^h(\sigma))d\sigma\nonumber\\
&+&\int^{t_{j+1}}_s
S_h(t_{j+1}-\sigma)P_hdW(\sigma),\quad s\in [t_j, t_{j+1}].
\end{eqnarray}
Substituting \eqref{main6a} in \eqref{main5a} yields
\begin{eqnarray}
\label{main7a}
F(X^h(s))-F(X^h(t_{j+1}))&=&-I^h_{j+1,s}\left(S_h(t_{j+1}-s)-\mathbf{I}\right)X^h(s)\nonumber\\
&-&I^h_{j+1,s}\int^{t_{j+1}}_sS_h(t_{j+1}-\sigma)P_hF(X^h(\sigma))d\sigma\nonumber\\
&-&I^h_{j+1,s}\int^{t_{j+1}}_sS_h(t_{j+1}-\sigma)P_hdW(\sigma), 
\end{eqnarray}
where 
\begin{eqnarray}
\label{main8a}
I^h_{k,s}:=\int_0^1F'\left(X^h(t_k)+\lambda\left(X^h(s)-X^h(t_k)\right)\right)d\lambda,\quad k\in\{0, 1,\cdots,M-1\}.
\end{eqnarray}
Note that using \rmref{remarkassumption3}, \thmref{Regularity} and \eqref{prior2} one can easily check that for any $p\geq 1$
\begin{eqnarray}
\label{main9a}
\Vert I^h_{k,s}\Vert_{L^{2p}(\Omega, H)}\leq C,\quad k\in\{0,\cdots,M-1\},\quad s\in[t_k, t_{k+1}]
\end{eqnarray}
Substituting \eqref{main7a} in the expression of $J_{21}$ from \eqref{main2a} and using triangle inequality yields
\begin{eqnarray}
\label{main10a}
J_{21}&\leq& \left\Vert\sum_{j=0}^{m-1}\int_{t_j}^{t_{j+1}}S_h(t_m-s) I^h_{j+1,s}\left(S_h(t_{j+1}-s)-\mathbf{I}\right)X^h(s)\right\Vert_{L^{2p}(\Omega, H)}\nonumber\\
&+&\left\Vert \sum_{j=0}^{m-1}\int_{t_j}^{t_{j+1}}S_h(t_m-s)I^h_{j+1,s}\int^{t_{j+1}}_sS_h(t_{j+1}-\sigma)P_hF(X^h(\sigma))d\sigma ds\right\Vert_{L^{2p}(\Omega, H)}\nonumber\\
&+&\left\Vert\sum_{j=0}^{m-1}\int_{t_j}^{t_{j+1}}S_h(t_m-s)I^h_{j+1,s}\int^{t_{j+1}}_sS_h(t_{j+1}-\sigma)P_hdW(\sigma)ds\right\Vert_{L^{2p}(\Omega, H)}\nonumber\\
&=:&J_{21}^{(1)}+J_{21}^{(2)}+J_{21}^{(3)}.
\end{eqnarray}
Using the smoothing properties of the semigroup, \eqref{main9a} and  \thmref{Regularity} yields
\begin{eqnarray}
\label{main11a}
J_{21}^{(1)}&\leq& \sum_{j=0}^{m-1}\int_{t_j}^{t_{j+1}}\Vert S_h(t_m-s)I^h_{j+1,s}\Vert_{L(H)}\left\Vert \left(S_h(t_{j+1}-s)-\mathbf{I}\right)A_h^{-\beta/2}A_h^{\beta/2}X^h(s)\right\Vert_{L^{2p}(\Omega, H)}ds\nonumber\\
&\leq& C\sum_{j=0}^{m-1}\int_{t_j}^{t_{j+1}}\left\Vert\left(S_h(t_{j+1}-s)-\mathbf{I}\right)A_h^{-\beta/2}\right\Vert_{L(H)}\left\Vert A_h^{\beta/2}X^h(s)\right\Vert_{L^{2p}(\Omega, H)}ds\nonumber\\
&\leq& C\sum_{j=0}^{m-1}\int_{t_j}^{t_{j+1}}(t_{j+1}-s)^{\beta/2}ds\leq C\Delta t^{\beta/2}.
\end{eqnarray}
Using triangle inequality, the smoothing properties of the semi-group, \eqref{main9a} and \thmref{Regularity} yields
\begin{eqnarray}
\label{main12a}
J_{21}^{(2)}&\leq& \sum_{j=0}^{m-1}\int_{t_j}^{t_{j+1}}\int^{t_{j+1}}_s\left\Vert S_h(t_m-s)I^h_{j+1,s}S_h(t_{j+1}-\sigma)P_hF(X^h(\sigma))\right\Vert_{L^{2p}(\Omega, H)}d\sigma ds\nonumber\\
&\leq& C\sum_{j=0}^{m-1}\int_{t_j}^{t_{j+1}}(t_{j+1}-s)ds\leq C\Delta t.
\end{eqnarray}
Using the stochastic Fubini's Theorem \cite{Prato,Prevot} and the Burkh\"{o}lder-Davis-Gundy's inequality yields
\begin{eqnarray}
\label{main13a}
J_{21}^{(3)}&=&\left\Vert\sum_{j=0}^{m-1}\int_{t_j}^{t_{j+1}}\int^{t_{j+1}}_{t_j}
\chi_{[s, t_j)}(\sigma)S_h(t_m-s)I^h_{j+1,s}S_h(t_{j+1}-\sigma)P_hdW(\sigma)ds\right\Vert_{L^{2p}(\Omega, H)}\\
&=&\left\Vert\sum_{j=0}^{m-1}\int_{t_j}^{t_{j+1}}\int^{t_{j+1}}_{t_j}
\chi_{[s, t_j)}(\sigma)S_h(t_m-s)I^h_{j+1,s}S_h(t_{j+1}-\sigma)P_hdsdW(\sigma)\right\Vert_{L^{2p}(\Omega, H)}\nonumber\\
&\leq& C\left(\sum_{j=0}^{m-1}\int_{t_j}^{t_{j+1}}\left\Vert \int_{t_j}^{t_{j+1}}\chi_{[s, t_j)}(\sigma)S_h(t_m-s)I^h_{j+1,s}S_h(t_{j+1}-\sigma)P_hQ^{\frac{1}{2}}ds\right\Vert^2_{L^{2p}(\Omega, \mathcal{L}_2(H))}d\sigma\right)^{\frac{1}{2}}.\nonumber
\end{eqnarray}
Using Cauchy-Schwartz's inequality,  \cite[Lemma 11]{Antjd1}, \eqref{main9a} and the smoothing properties of the semigroup yields
{\small
\begin{eqnarray}
\label{main14a}
J_{21}^{(3)}&\leq&C\Delta t^{1/2}\left(\sum_{j=0}^{m-1}\int_{t_j}^{t_{j+1}}\int_{t_j}^{t_{j+1}}\left\Vert
\chi_{[s, t_j)}(\sigma)S_h(t_m-s)I^h_{j+1,s}S_h(t_{j+1}-\sigma)P_hQ^{\frac{1}{2}}\right\Vert^2_{L^{2p}(\Omega, \mathcal{L}_2(H))}dsd\sigma\right)^{\frac{1}{2}}\nonumber\\
&\leq& C\Delta t^{1/2}\left(\sum_{j=0}^{m-1}\int_{t_j}^{t_{j+1}}\int_{t_j}^{t_{j+1}}\left\Vert S_h(t_{j+1}-\sigma)A_h^{\frac{1-\beta}{2}}\right\Vert^2_{L(H)}\left\Vert A_h^{\frac{\beta-1}{2}}P_hQ^{\frac{1}{2}}\right\Vert^2_{\mathcal{L}_2(H)}dsd\sigma\right)^{1/2}\nonumber\\
&\leq& C\Delta t^{1/2}\left(\sum_{j=0}^{m-1}\int_{t_j}^{t_{j+1}}\int_{t_j}^{t_{j+1}}(t_{j+1}-\sigma)^{\min(0, \beta-1)}d\sigma ds\right)^{\frac{1}{2}}\leq C\Delta t^{\min\left(1, \frac{1+\beta}{2}\right)}.
\end{eqnarray}
}
Substituting \eqref{main14a}, \eqref{main12a} and \eqref{main11a} in \eqref{main10a} yields
\begin{eqnarray}
\label{main15a}
J_{21}\leq C\Delta t^{\beta/2-\epsilon}.
\end{eqnarray}
Substituting \eqref{main15a}, \eqref{main4a} and \eqref{main3a} in \eqref{main2a} yields
\begin{eqnarray}
\label{main16a}
J_2\leq C\Delta t^{\beta/2-\epsilon}.
\end{eqnarray}
Using the Burkh\"{o}lder-Davis-Gundy's inequality \eqref{Davis1},  \eqref{main9a}, \cite[Lemma 11]{Antjd1} and \lemref{RecallMukam} (iii) yields
\begin{eqnarray}
\label{main17a}
J_3&\leq& \left(\sum_{j=0}^{m-1}\int_{t_j}^{t_{j+1}}\left\Vert \left(S_h(t_{m-j})-S^{m-j}_{h,\Delta t}\right)P_hQ^{\frac{1}{2}}\right\Vert^2_{L^{2p}(\Omega, \mathcal{L}_2(H))}ds\right)^{1/2}\nonumber\\
&\leq& C\left(\sum_{j=0}^{m-1}\int_{t_j}^{t_{j+1}}\Delta t^{\beta}t_{m-j}^{-1}\left\Vert A_h^{\frac{\beta-1}{2}}P_hQ^{\frac{1}{2}}\right\Vert^2_{\mathcal{L}_2(H)}ds\right)^{1/2}\nonumber\\
&\leq& C\Delta t^{\beta/2-\epsilon}\left(\sum_{j=0}^{m-1}\int_{t_j}^{t_{j+1}}t_{m-j}^{-1+\epsilon}ds\right)^{1/2}\leq C\Delta t^{\beta/2-\epsilon}.
\end{eqnarray}
Substituting \eqref{main17a}, \eqref{main16a} and \eqref{main5b} in \eqref{main5} yields
\begin{eqnarray}
\label{errortime1}
\Vert X(t_m)-\widetilde{X}^h(t_m)\Vert_{L^{2p}(\Omega, H)}\leq C\Delta t^{\beta/2-\epsilon}.
\end{eqnarray}
It remains to estimate $\Vert \widetilde{X}^h(t_m)-X^h_m\Vert_{L^{2p}(\Omega, H)}$. To do so, we introduce $\widetilde{e}^h_m:=\widetilde{X}^h(t_m)-X^h_m$. It is easy to see that $\widetilde{e}^h_m$ satisfies the following equation
\begin{eqnarray}
\label{numaux1}
\widetilde{e}^h_m-\widetilde{e}^h_{m-1}+\Delta tA_h\widetilde{e}^h_m=\Delta tP_h\left(F(X^h(t_m))-F(X^h_m)\right),\quad \widetilde{e}^h_0=0.
\end{eqnarray}
Taking the inner product in both sides of \eqref{numaux1} with $\widetilde{e}^h_m$ yields
\begin{eqnarray}
\label{numaux2}
\langle\widetilde{e}^h_m-\widetilde{e}^h_{m-1}, \widetilde{e}^h_m\rangle+\Delta t\langle A_h\widetilde{e}^h_m, \widetilde{e}^h_m\rangle=\Delta t\left\langle\left(F(X^h(t_m))-F(X^h_m)\right),\widetilde{e}^h_m\right\rangle.
\end{eqnarray}
Using \eqref{reaux4} and \assref{assumption2}, it follows that
\begin{eqnarray}
\label{numaux3}
&&\frac{1}{2}\left(\Vert\widetilde{e}^h_m\Vert^2-\Vert\widetilde{e}^h_{m-1}\Vert^2\right)+\Delta t\langle A_h\widetilde{e}^h_m, \widetilde{e}^h_m\rangle\nonumber\\
&\leq&\Delta t\left\langle\left(F(X^h(t_m))-F(X^h_m)\right),\widetilde{e}^h_m\right\rangle\nonumber\\
&=&\Delta t\left\langle\left(F(X^h(t_m))-F(\widetilde{X}^h(t_m))\right),\widetilde{e}^h_m\right\rangle+\Delta t\left\langle\left(F(\widetilde{X}^h(t_m))-F(X^h_m)\right),\widetilde{e}^h_m\right\rangle\nonumber\\
&\leq& \Delta t\left\langle\left(F(X^h(t_m))-F(\widetilde{X}^h(t_m))\right),\widetilde{e}^h_m\right\rangle+C\Delta t\Vert\widetilde{e}^h_m\Vert^2.
\end{eqnarray}
From the coercivity estimate \eqref{ellip2}, \eqref{numaux3} and Cauchy-Schwartz inequality, it holds that
\begin{eqnarray}
\label{numaux4}
\lambda_0\Delta t\Vert \widetilde{e}^h_m\Vert^2_1&\leq& \Delta ta\left(\widetilde{e}^h_m, \widetilde{e}^h_m\right)\nonumber\\
&\leq& \Delta t\langle A_h\widetilde{e}^h_m, \widetilde{e}^h_m\rangle+\frac{1}{2}\left(\Vert\widetilde{e}^h_m\Vert^2-\Vert\widetilde{e}^h_{m-1}\Vert^2\right)-\frac{1}{2}\left(\Vert\widetilde{e}^h_m\Vert^2-\Vert\widetilde{e}^h_{m-1}\Vert^2\right)\nonumber\\
&\leq& \Delta t\left\langle\left(F(X^h(t_m))-F(\widetilde{X}^h(t_m))\right),\widetilde{e}^h_m\right\rangle+C\Delta t\Vert\widetilde{e}^h_m\Vert^2-\frac{1}{2}\left(\Vert\widetilde{e}^h_m\Vert^2-\Vert\widetilde{e}^h_{m-1}\Vert^2\right)\nonumber\\
&\leq& \frac{\Delta t}{2}\Vert F(X^h(t_m))-F(\widetilde{X}^h(t_m))\Vert^2+\frac{\Delta t}{2}\Vert\widetilde{e}^h_m\Vert^2-\frac{1}{2}\left(\Vert\widetilde{e}^h_m\Vert^2-\Vert\widetilde{e}^h_{m-1}\Vert^2\right).
\end{eqnarray}
Using \eqref{numaux3}, Cauchy-Schwartz's inequality and \eqref{numaux4} yields
\begin{eqnarray}
\label{numaux5}
\frac{1}{2}\left(\Vert\widetilde{e}^h_m\Vert^2-\Vert\widetilde{e}^h_{m-1}\Vert^2\right)
&\leq&\Delta t\left\langle\left(F(X^h(t_m))-F(\widetilde{X}^h(t_m))\right),\widetilde{e}^h_m\right\rangle+C\Delta t\Vert\widetilde{e}^h_m\Vert^2-\Delta t\langle A_h\widetilde{e}^h_m, \widetilde{e}^h_m\rangle\nonumber\\
&\leq& \frac{\Delta t}{2}\Vert F(X^h(t_m))-F(\widetilde{X}^h(t_m))\Vert^2+\frac{\Delta t}{2}\Vert\widetilde{e}^h_m\Vert^2+C\Delta t\Vert\widetilde{e}^h_m\Vert^2\nonumber\\
&+&\Delta t\left\vert\left\langle A_h\widetilde{e}^h_m, \widetilde{e}^h_m\right\rangle\right\vert.
\end{eqnarray}
Since  $\widetilde{e}^h_m\in V_h$, using Cauchy-Schwartz's inequality,   the equivalence of norms \cite[(2.12)]{Larsson2} and \cite[Lemma 3.1]{Antonio2}, it holds that
\begin{eqnarray}
\label{numaux6}
\left\vert\left\langle A_h\widetilde{e}^h_m, \widetilde{e}^h_m\right\rangle\right\vert&=&\left\vert\left\langle A_h^{1/2}\widetilde{e}^h_m, (A_h^*)^{1/2}\widetilde{e}^h_m\right\rangle\right\vert\leq \frac{1}{2}\Vert A_h^{1/2}\widetilde{e}^h_m\Vert^2+\frac{1}{2}\Vert (A_h^*)^{1/2}\widetilde{e}^h_m\Vert^2\nonumber\\
&\leq& \frac{C}{2}\Vert \widetilde{e}^h_m\Vert^2_1+\frac{1}{2}\Vert (A_h^*)^{1/2}A_h^{-1/2}\Vert^2_{L(H)}\Vert A_h^{1/2}\widetilde{e}^h_m\Vert^2\nonumber\\
&\leq& \frac{C}{2}\Vert \widetilde{e}^h_m\Vert^2_1+\frac{C}{2}\Vert A_h^{1/2}\widetilde{e}^h_m\Vert^2\leq C\Vert \widetilde{e}^h_m\Vert^2_1.
\end{eqnarray}
Substituting \eqref{numaux6} in \eqref{numaux5} and using \eqref{numaux4} yields
\begin{eqnarray}
\label{numaux7}
\frac{1}{2}\left(\Vert\widetilde{e}^h_m\Vert^2-\Vert\widetilde{e}^h_{m-1}\Vert^2\right)
&\leq& \frac{\Delta t}{2}\Vert F(X^h(t_m))-F(\widetilde{X}^h(t_m))\Vert^2+\frac{\Delta t}{2}\Vert\widetilde{e}^h_m\Vert^2+C\Delta t\Vert\widetilde{e}^h_m\Vert^2\nonumber\\
&+&\frac{C}{\lambda_0}.\lambda_0\Delta t\Vert\widetilde{e}^h_m\Vert^2_1\nonumber\\
&\leq&C\Delta t\Vert F(X^h(t_m))-F(\widetilde{X}^h(t_m))\Vert^2+C\Delta t\Vert\widetilde{e}^h_m\Vert^2-\frac{C}{2\lambda_0}\left(\Vert\widetilde{e}^h_m\Vert^2-\Vert\widetilde{e}^h_{m-1}\Vert^2\right)\nonumber.
\end{eqnarray}
It follows from the above inequality that
\begin{eqnarray}
\label{numaux8}
\left(\frac{1}{2}+\frac{C}{2\lambda_0}\right)\left(\Vert\widetilde{e}^h_m\Vert^2-\Vert\widetilde{e}^h_{m-1}
\Vert^2\right)\leq C\Delta t\Vert F(X^h(t_m))-F(\widetilde{X}^h(t_m))\Vert^2+C\Delta t\Vert\widetilde{e}^h_m\Vert^2.
\end{eqnarray}
We can rewrite \eqref{numaux8} as follows
\begin{eqnarray}
\label{numaux9}
\Vert\widetilde{e}^h_m\Vert^2-\Vert\widetilde{e}^h_{m-1}
\Vert^2\leq C\Delta t\Vert F(X^h(t_m))-F(\widetilde{X}^h(t_m))\Vert^2+C\Delta t\Vert\widetilde{e}^h_m\Vert^2.
\end{eqnarray}
Summing over $m$ in both sides of \eqref{numaux9} yields
\begin{eqnarray}
\label{numaux10}
\Vert \widetilde{e}^h_m\Vert^2\leq C\sum_{j=0}^m\Vert \widetilde{e}^h_j\Vert^2+C\Delta t\sum_{j=0}^m\Vert F(X^h(t_j))-F(\widetilde{X}^h(t_j))\Vert^2.
\end{eqnarray}
Taking the $L^{p}$ norm in  \eqref{numaux10}, using \assref{assumption3}, \thmref{Regularity}, \eqref{errortime1} and \eqref{aux1b} yields
\begin{eqnarray}
\label{numaux11}
&&\Vert \widetilde{e}^h_m\Vert_{L^{2p}(\Omega, H)}\nonumber\\
&\leq& C\sum_{j=0}^m\Vert \widetilde{e}^h_j\Vert_{L^{2p}(\Omega,H)}+C\Delta t\sum_{j=0}^m\Vert F(X^h(t_j))-F(\widetilde{X}^h(t_j))\Vert_{L^{2p}(\Omega, H)}\nonumber\\
&\leq& C\sum_{j=0}^m\Vert \widetilde{e}^h_j\Vert_{L^{2p}(\Omega,H)}\nonumber\\
&+&C\Delta t\sum_{j=0}^m\Vert X^h(t_j)-\widetilde{X}^h(t_j)\Vert_{L^{4p}(\Omega, H)}\left(1+\Vert X^h(t_j)\Vert^{2c_1}_{L^{4pc_1}(\Omega, \mathcal{C}(\overline{\Lambda}, \mathbb{R}))}+\Vert\widetilde{X}^h(t_j)\Vert^{2c_1}_{L^{4pc_1}(\Omega, \mathcal{C}(\overline{\Lambda}, \mathbb{R}))}\right)\nonumber\\
&\leq&C\sum_{j=0}^m\Vert \widetilde{e}^h_j\Vert_{L^{2p}(\Omega,H)}+C\Delta t\sum_{j=0}^m\Delta t^{\beta/2-\epsilon}=C\Delta t^{\beta/2-\epsilon}+C\sum_{j=0}^m\Vert \widetilde{e}^h_j\Vert_{L^{2p}(\Omega,H)}.
\end{eqnarray}
Employing \lemref{Momentboundlemma} and \eqref{aux1b} it holds that $\Vert \widetilde{e}^h_m\Vert_{L^{2p}(\Omega, H)}\leq C$ for any $m\in\{0, \cdots, M\}$ and $M\in\mathbb{N}$. Therefore, applying the discrete Gronwall's lemma \cite[Lemma 7.1]{Larsson1} to \eqref{numaux11} yields
\begin{eqnarray}
\label{errortime2}
\Vert \widetilde{e}^h_m\Vert_{L^{2p}(\Omega, H)}\leq C\Delta t^{\beta/2-\epsilon}.
\end{eqnarray}
Substituting \eqref{errortime2}  and \eqref{errortime1} in \eqref{main4} completes the proof of \thmref{mainresult1}. 

 \section{Numerical simulations}
\label{numerik} 

We consider the following two dimensional  stochastic  reactive dominated advection
diffusion reaction  with  constant diagonal diffusion coefficient
\begin{eqnarray}
\label{reactiondif1}
dX=\left[ D \varDelta X-\nabla \cdot(\mathbf{q}X)- (X^5-X ) \right]dt+dW,
\end{eqnarray}
with  mixed Neumann-Dirichlet boundary conditions on $\Lambda=[0,L_1]\times[0,L_2]$. 
The Dirichlet boundary condition is $X=1$ at $\Gamma=\{ (x,y) :\; x =0\}$ and 
we use the homogeneous Neumann boundary conditions elsewhere.
Note $q$ is the Darcy velocity obtained as in \cite{Antonio1}.
Our noise has the  eigenfunctions $\{e_{i}^{(1)}e_{j}^{(2)}\}_{i,j\geq 0}
$  as the operator $-\varDelta$ with homogeneous  Neumann boundary conditions given by 
\begin{eqnarray}
e_{0}^{(l)}=\sqrt{\dfrac{1}{L_{l}}},\;\;\;\lambda_{0}^{(l)}=0,\;\;\;
e_{i}^{(l)}=\sqrt{\dfrac{2}{L_{l}}}\cos(\lambda_{i}^{(l)}x),\;\;\;\lambda_{i}^{(l)}=\dfrac{i
  \,\pi }{L_{l}},
\end{eqnarray}
where $l \in \left\lbrace 1, 2 \right\rbrace$ and  $i=\{1, 2, 3, \cdots\}$.
In the noise representation \eqref{noise}, we have used
\begin{eqnarray}
\label{noise2}
 q_{i,j}=\left( i^{2}+j^{2}\right)^{-(\beta +\delta)}, \, \beta>0,
\end{eqnarray} 
Here the noise and the linear operator are supposed to have the same
eigenfunctions. We obviously have 
\begin{eqnarray}
\underset{(i,j) \in \mathbb{N}^{2}}{\sum}\lambda_{i,j}^{\beta-1}q_{i,j}<  \pi^{2}\underset{(i,j) 
\in \mathbb{N}^{2}}{\sum} \left( i^{2}+j^{2}\right)^{-(1+\delta)} <\infty,
\end{eqnarray}
thus  \assref{assumption4} is fulfilled. Note that \assref{assumption3} is fulfilled with $\varphi(x)=-x^5+x$. It remains to prove that \assref{assumption2} is fulfilled. One can easily check that 
\begin{eqnarray}
\label{identite1}
\left\langle u-v, F(u)-F(v)\right\rangle&=&\Vert u-v\Vert^2-\left\langle u-v, u^5-v^5\right\rangle\nonumber\\
&=&\Vert u-v\Vert^2-\int_{\Lambda}\left(u(x)-v(x)\right)\left(u^5(x)-v^5(x)\right)dx,\quad u, v\in H.
\end{eqnarray}
Let us recall the following simple identity 
\begin{eqnarray}
\label{identite2}
a^5-b^5=(a-b)(a^4+a^3b+a^2b^2+ab^3+b^4),\quad a, b\in\mathbb{R}. 
\end{eqnarray}
We claim also that the following estimate holds 
\begin{eqnarray}
\label{identie3}
\psi(a, b):=a^4+a^3b+a^2b^2+ab^3+b^4\geq 0,\quad a, b\in \mathbb{R}.
\end{eqnarray}
In fact, we distinguish two situations:
\begin{itemize}
\item If $a\geq b$, then it follows that
\begin{eqnarray*}
\psi(a, b)=a^4+\underbrace{a^3b+a^2b^2}_{a^2b(a+b)}+\underbrace{ab^3+b^4}_{b^3(a+b)}\geq a^4+2a^2b^2+2b^4\geq 0.
\end{eqnarray*}
\item If $a\leq b$, then it follows that
\begin{eqnarray*}
\psi(a, b)=\underbrace{a^4+a^3b}_{a^3(a+b)}+\underbrace{a^2b^2+ab^3}_{ab^2(a+b)}+b^4\geq 2a^4+2a^2b^2+b^4\geq 0.
\end{eqnarray*}
\end{itemize}
This prove the claim. Using the identity \eqref{identite2} and the estimate \eqref{identie3}, it holds  that
\begin{eqnarray}
\label{identite4}
&&-\int_{\Lambda}\left(u(x)-v(x)\right)\left(u^5(x)-v^5(x)\right)dx\nonumber\\
&=&-\int_{\Lambda}\left(u(x)-v(x)\right)^2\left(u^4(x)+u^3(x)v(x)+u^2(x)v^2(x)+u(x)v^3(x)+v^4(x)\right)dx\leq 0.
\end{eqnarray}
Substituting \eqref{identite4} in \eqref{identite1} yields
\begin{eqnarray*}
\left\langle u-v, F(u)-F(v)\right\rangle\leq \Vert u-v\Vert^2.
\end{eqnarray*}
Hence \assref{assumption2} is fulfilled.  


In our simulations, we take $\beta =2$ and $\delta=0.001$.
%
\begin{figure}[!ht]
\begin{center}
 \includegraphics[width=0.5\textwidth]{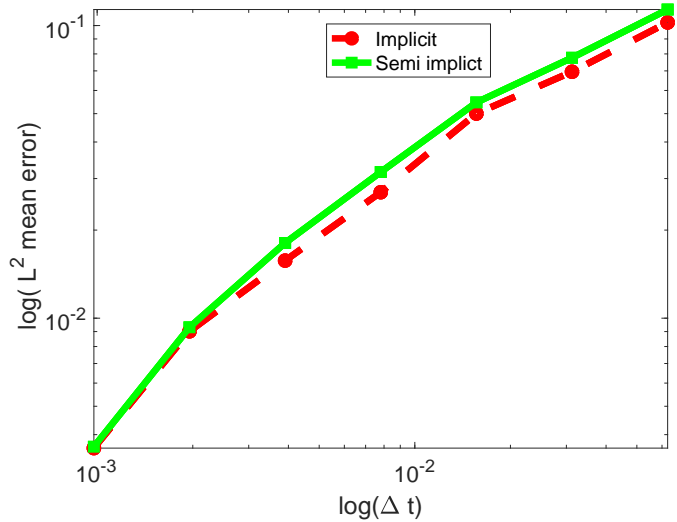}
  \end{center}
\caption{Convergence of the  implicit and semi implicit schemes with  $\beta=2$ and $\delta=0.001$ in \eqref{noise2} at the final time $T=1$.
The order of convergence in time  is $0.92$ and $0.93$ respectively. The total number of samples used is $50$. Note that the ''reference solution'' for each sample is the numerical solution with the smaller  time step
$\Delta t=1/2018$}
 \label{FIGII}
 \end{figure}
 
     
\end{document}